\newcommand{\PP}{\mathbb{P}}
\newcommand{\F}{\mathbb{F}}
\newcommand{\KK}{\mathbb{K}}
\newcommand{\Q}{\mathbb{Q}}
\newcommand{\Z}{\mathbb{Z}}
\newcommand{\iso}{\xrightarrow{\sim}}
\newcommand{\hZ}{\widehat{\Z}}
\newcommand{\Fq}{\F_q}
\newcommand{\Qlbar}{\overline{\mathbb{Q}}_{\ell}}
\newcommand{\et}{\mathrm{\acute{e}t}}
\newcommand{\pie}{\pi_1^{\et}}
\newcommand{\calP}{\mathcal{P}}
\newcommand{\calV}{\mathcal{V}}
\newcommand{\ucalVz}{\underline{\mathcal{V}_0}}
\newcommand{\calW}{\mathcal{W}}
\DeclareMathOperator{\Ker}{Ker}
\DeclareMathOperator{\Gal}{Gal}
\newcommand{\Image}{\mathrm{Im}}
\def\et{\mathrm{\acute{e}t}}
\begin{document}
\bibliographystyle{alpha}
\newtheorem{theorem}[subsubsection]{Theorem}
\newtheorem*{theorem*}{Theorem}
\newtheorem{conjecture}[subsubsection]{Conjecture}
\newtheorem{proposition}[subsubsection]{Proposition}
\newtheorem{lemma}[subsubsection]{Lemma}
\newtheorem*{lemma*}{Lemma}
\newtheorem{corollary}[subsubsection]{Corollary}

\theoremstyle{definition}
\newtheorem{definition}[subsubsection]{Definition}
\newtheorem*{definition*}{Definition}
\newtheorem{property}[subsubsection]{Property}

\newtheorem{rema}[subsubsection]{Remark}
\newtheorem{example}[subsubsection]{Example}
\newtheorem{notation}[subsubsection]{Notation}
\newtheorem{construction}[subsubsection]{Construction}

\setcounter{tocdepth}{2}

\title{Some remarks on the companions conjecture for normal varieties}

\author{Marco D'Addezio}
\date{\today}

\address{Max-Planck-Institut für Mathematik, Vivatsgasse 7, 53111, Bonn, Germany}
\email{daddezio@mpim-bonn.mpg.de}

\begin{abstract}
Drinfeld in 2010 proved the companions conjecture for smooth varieties over a finite field, generalizing L. Lafforgue's result for smooth curves. We study the obstruction to prove the conjecture for arbitrary normal varieties. To do this, we introduce a new property of morphisms. We verify this property in some cases, showing thereby the companions conjecture for some singular normal varieties. 
\end{abstract}
\maketitle
\tableofcontents

\section{Introduction}
\subsection{The companions conjecture}
Let $\Fq$ be a finite field of characteristic $p$ and $X_0$ a connected normal variety over $\Fq$. Let $\ell$ be a prime different from $p$ and $\calV_0$ an irreducible Weil lisse $\Qlbar$-sheaf over $X_0$ with finite order determinant. Write $E$ for the subfield of $\Qlbar$ generated by the coefficients of all these Frobenius polynomials at closed points. Deligne proved that $E$ is a finite extension of $\Q$, \cite{Del}. He showed this finiteness reducing geometrically to the case of curves, which was already proven by L. Lafforgue as a consequence of the Langlands correspondence, \cite{Laf}. This property of the field $E$ was conjectured by Deligne in \cite[Conjecture 1.2.10]{Weil2} together with other properties for $\calV_0$. The principal part of the conjecture is the following one.

\begin{conjecture}[Companions conjecture]\label{intro-companions-c}After possibly replacing $E$ with a finite extension, for every finite place $\lambda$ not dividing $p$ there exists a Weil lisse $E_{\lambda}$-sheaf $E$-compatible\footnote{Cf. \cite[Definition 3.1.15]{Dad}.} with $\calV_0$.
\end{conjecture}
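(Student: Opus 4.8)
The plan is to bootstrap from Drinfeld's theorem, which already proves Conjecture~\ref{intro-companions-c} when $X_0$ is smooth, so that the whole difficulty is to transfer companions from a smooth variety attached to $X_0$ back to $X_0$. Let $j\colon U_0\hookrightarrow X_0$ be the inclusion of the smooth locus; it is open and dense, and since $X_0$ is normal its complement $Z_0=X_0\setminus U_0$ is closed of codimension $\geq 2$ — but $X_0$ is not smooth, so purity fails and the canonical surjection $\pie(U_0)\twoheadrightarrow\pie(X_0)$ need not be an isomorphism. The restriction $\calV_0|_{U_0}$ is again irreducible Weil lisse with finite-order determinant, now on a smooth variety, so Drinfeld produces a compatible system of companions $\{\calW_0^{\lambda}\}_\lambda$, each $\calW_0^{\lambda}$ an irreducible Weil lisse $\overline{\mathbb{Q}}_\lambda$-sheaf on $U_0$ with finite-order determinant, and these are unique up to isomorphism.

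It then remains to show that each $\calW_0^{\lambda}$ extends to $X_0$, i.e. that the associated representation of $\pie(U_0)$ factors through $\pie(X_0)$, equivalently that the local monodromy of $\calW_0^{\lambda}$ around $Z_0$ is trivial. For $\calV_0$ this is automatic, since $\calV_0$ lives on all of $X_0$; the content is that the companions operation should preserve this triviality. One way to organize the argument is to replace $X_0$ by a de Jong alteration $f\colon X_0'\to X_0$ with $X_0'$ smooth: then $f^{*}\calV_0$ has companions $(f^{*}\calV_0)^{\lambda}$ on $X_0'$ by Drinfeld, $f^{*}\calV_0$ itself carries a tautological descent datum along $f$, and one wants the companions operation to transport that datum — an isomorphism $p_1^{*}(f^{*}\calV_0)^{\lambda}\iso p_2^{*}(f^{*}\calV_0)^{\lambda}$ over $X_0'\times_{X_0}X_0'$ satisfying the cocycle condition over the triple fiber product — so that $(f^{*}\calV_0)^{\lambda}$ descends to the desired lisse $E_\lambda$-sheaf on $X_0$. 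By uniqueness of companions, such a transport is forced \emph{as soon as} companions are available on the varieties appearing in the \v{C}ech nerve of $f$.

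I would therefore isolate this as a property of the morphism $f$: that every object of $\Weil(X_0')$ lying in the essential image of $f^{*}$ has all of its companions again in the essential image of $f^{*}$, i.e. that companions commute with descent along $f$ (in the smooth-locus picture: that a lisse sheaf on $U_0$ which extends to $X_0$ has all of its companions extending to $X_0$ as well). Granting this property for some such $f$, Conjecture~\ref{intro-companions-c} follows for $X_0$. The remaining, unconditional, work is to verify the property in favorable cases: when all the fiber products in the \v{C}ech nerve of $f$ are themselves smooth, or admit compatible smooth hypercovers, so that Drinfeld applies to them directly; when the singularities of $X_0$ are sufficiently tame, for instance quotient singularities, where the relevant local fundamental groups are finite and triviality of local monodromy can be detected arithmetically; or when $\dim X_0$ is small, where one can cut down by Bertini to smooth curves meeting $Z_0$ and invoke L. Lafforgue's theorem together with a Chebotarev density argument to propagate the triviality of local monodromy from $\calV_0$ to each $\calW_0^{\lambda}$.

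The main obstacle is precisely this compatibility. Drinfeld's theorem, our only source of companions, applies only over smooth varieties, whereas the auxiliary varieties one is forced to confront — the fiber products $X_0'\times_{X_0}X_0'$ in the alteration picture, or the local geometry of $X_0$ along the codimension-$\geq 2$ locus $Z_0$ in the smooth-locus picture — are in general singular, so the construction of companions cannot simply be rerun there. More fundamentally, companions are manufactured purely out of Frobenius traces at closed points and a priori retain no information about ramification or local monodromy over the singular locus; pinning down that they nonetheless respect this extra structure is the genuine difficulty, and it is not known how to do so for an arbitrary normal $X_0$.
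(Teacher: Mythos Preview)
The statement you are addressing is a \emph{conjecture}, and the paper does not prove it in general; rather, the paper isolates the obstruction to deducing it from Drinfeld's theorem and verifies that obstruction vanishes in some special cases. Your proposal is in the same spirit: it is not a proof of the conjecture (and you say as much in your final paragraph), but an analysis of what remains to be done once Drinfeld's smooth case is available. So there is no ``paper's proof'' to compare against, only two parallel ways of organising the obstruction.

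That said, the framings differ. The paper does not work with the smooth locus $U_0\hookrightarrow X_0$ or with an alteration and its \v{C}ech nerve. Instead, it assumes a resolution $h_0\colon Y_0\to X_0$ with $Y_0$ smooth that contracts a closed subvariety $Z_0\subseteq Y_0$ to a point, and uses the exact sequence
\[
\pie(Z,z)\to\pie(Y_0,z)\to\pie(X_0,h(z))\to 1
\]
to translate ``descends to $X_0$'' into ``is geometrically trivial on $Z_0$''. The obstruction is then formulated as the property \emph{$\lambda$-uniformity} of $Z_0\subseteq Y_0$: for compatible irreducible untwisted lisse sheaves on $Y_0$, geometric triviality on $Z_0$ should be independent of $\lambda$. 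This is more concrete than your descent-along-alterations picture, because it reduces the question to a statement about a single morphism $Z_0\to Y_0$ between varieties, with $Y_0$ smooth, rather than about the full simplicial object $X_0'^{\times_{X_0}\bullet}$ whose higher terms are typically singular. The paper then proves $\lambda$-uniformity when $Z_0$ is normal, when $Z_0$ is a semi-stable curve with tree-like dual graph, or when the image of $\pie(Z)$ in $\pie(Y)$ is large or $\pie(Y)$ is virtually solvable.

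Your smooth-locus and alteration pictures are not wrong, but note two points. First, your descent criterion via the \v{C}ech nerve is delicate: even granting companions on each $X_0'^{\times_{X_0} n}$, uniqueness of companions only holds up to semisimplification, so transporting an actual isomorphism (not just a Frobenius-trace identity) and its cocycle condition is not automatic. Second, your ``favorable cases'' (quotient singularities, low dimension via Bertini) are plausible but not worked out; the paper's cases, by contrast, are proved, and go through the theory of weights and the $\lambda$-independence of finite monodromy kernels (Larsen--Pink) rather than through any \v{C}ech machinery.
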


When $X_0$ has dimension $1$, the conjecture is again a consequence of the Langlands correspondence. For higher dimensional varieties, Drinfeld proved Conjecture \ref{intro-companions-c} when $X_0$ is smooth, \cite{Dri}. Unluckily, his method cannot be applied directly to prove the full conjecture, [\textit{ibid.}, §6].

\subsection{The obstruction}\label{obst:ss}
Suppose for simplicity that the singular locus of $X_0$ consists of one closed point and that we can solve that singularity. In other words, suppose that there exists a smooth variety $Y_0$ and a proper morphism $h_0:Y_0\to X_0$ sending $i_0:Z_0\hookrightarrow Y_0$ to a closed point $x_0\in |X_0|$ and such that $h_0$ is an isomorphism outside $Z_0$. Write $\F$ for an algebraic closure of $\Fq$ and suppose that $Z:=Z_0\otimes_{\Fq}\F$ is connected.
\begin{lemma}[Corollaire 6.11, SGA I, Exposé XI]

\label{i-contraction-fundamental-group-lemma} For every geometric point $z$ of $Z_0$ there exists an exact sequence $$\pi_1^{\et}(Z,z)\xrightarrow{i_*} \pi_1^{\et}(Y_0,z)\xrightarrow{h_{0*}} \pi_1^{\et}(X_0,h(z))\to 1$$ in the sense that the smallest normal closed subgroup containing the image of $i_*$ is the kernel of $h_{0*}$.
\end{lemma}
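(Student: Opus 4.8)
The plan is to recast the statement in terms of finite étale covers and to reduce it, via proper base change for the fundamental group, to the henselian local ring of $X_0$ at $x_0$. Write $\mathbf{FEt}(-)$ for the category of finite étale covers of a scheme, and let $\mathbf{FEt}(Y_0)^{Z}\subseteq\mathbf{FEt}(Y_0)$ be the full subcategory of those whose pullback along $Z\to Y_0$ is completely split. The asserted exactness is equivalent to: $(i)$ $h_{0*}$ is surjective (hence $h_0^{*}\colon\mathbf{FEt}(X_0)\to\mathbf{FEt}(Y_0)$ is fully faithful), and $(ii)$ $h_0^{*}$ is an equivalence onto $\mathbf{FEt}(Y_0)^{Z}$.

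For $(i)$, note that $h_0$ is proper, dominant (hence surjective) and birational, with geometrically connected fibres — over $U_0:=X_0\setminus\{x_0\}$ trivially, and over $x_0$ because $Z$ is connected; connectedness of $Z$ also forces $x_0$ to be $\Fq$-rational. Since $X_0$ is normal, Stein factorisation gives $h_{0*}\calO_{Y_0}=\calO_{X_0}$, and this persists after flat base change, so for any connected finite étale $W\to X_0$ the scheme $Y_0\times_{X_0}W$ has only trivial idempotent global functions, i.e.\ is connected; this is precisely surjectivity of $h_{0*}$. That $h_0^{*}$ factors through $\mathbf{FEt}(Y_0)^{Z}$, in particular that $\Image(i_*)\subseteq\Ker(h_{0*})$, is formal: since $x_0$ is $\Fq$-rational, $Z\xrightarrow{i}Y_0\xrightarrow{h_0}X_0$ factors through $\Spec\F$, so $\pie(Z,z)\to\pie(X_0,h(z))$ is trivial.

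The content is the essential surjectivity in $(ii)$; it suffices to treat a connected $f\colon Y_0'\to Y_0$ in $\mathbf{FEt}(Y_0)^{Z}$. Put $X_0':=\underline{\Spec}_{X_0}\big((h_0f)_*\calO_{Y_0'}\big)$, finite over $X_0$ by the coherence theorem, with Stein factorisation $Y_0'\xrightarrow{g}X_0'\xrightarrow{p}X_0$. Over $U_0$, where $h_0$ is an isomorphism, $X_0'|_{U_0}=Y_0'|_{h_0^{-1}(U_0)}$ is finite étale over $U_0$. For the point $x_0$, base change along the henselisation $\hat X:=\Spec\calO_{X_0,x_0}^{h}$, and write $\hat Y:=Y_0\times_{X_0}\hat X$, $\hat h\colon\hat Y\to\hat X$. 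By the proper base change theorem for the fundamental group, restriction to closed fibres is an equivalence $\mathbf{FEt}(\hat Y)\iso\mathbf{FEt}(h_0^{-1}(x_0))=\mathbf{FEt}(Z_0)$, and likewise $\mathbf{FEt}(\hat X)\iso\mathbf{FEt}(\Spec\Fq)$; these equivalences intertwine $\hat h^{*}$ with the pullback $(Z_0\to\Spec\Fq)^{*}$. Under the first equivalence, $Y_0'\times_{Y_0}\hat Y$ corresponds to $f^{-1}(Z_0)\to Z_0$, a finite étale cover that becomes completely split over $Z=Z_0\otimes_{\Fq}\F$; by the exact sequence $1\to\pie(Z)\to\pie(Z_0)\to\pie(\Spec\Fq)\to1$ (again using that $Z$ is connected) it is pulled back from $\Spec\Fq$, hence lies in the essential image of $(Z_0\to\Spec\Fq)^{*}$. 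Therefore $Y_0'\times_{Y_0}\hat Y\cong\hat h^{*}\hat X'$ for a finite étale $\hat X'\to\hat X$, and, since $\hat h_{*}\calO_{\hat Y}=\calO_{\hat X}$, a Stein-factorisation computation identifies $X_0'\times_{X_0}\hat X$ with $\hat X'$. Thus $p$ is finite, and étale after base change to $\hat X$ and to $U_0$; hence $p$ is finite étale.

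It remains to identify $h_0^{*}X_0'=Y_0\times_{X_0}X_0'$ with $Y_0'$: the canonical morphism $Y_0'\to Y_0\times_{X_0}X_0'$ determined by $f$ and $g$ is, over the generic point of $Y_0$, the identity of $\Spec L$ — where $L$ is the residue field of the generic point of $Y_0'$ — because $(h_0f)_*\calO_{Y_0'}$ has generic stalk $L$ by birationality of $h_0$; as both schemes are finite étale over $Y_0$ and $Y_0$ is connected, this morphism is an isomorphism. The step I expect to be the crux is the input used above: proper base change for $\pie$ over a henselian local base; the remainder is Stein factorisation, normality of $X_0$, and the Galois-categorical dictionary, with connectedness of $Z$ entering only through the two auxiliary exact sequences of fundamental groups and to pin down $\kappa(x_0)=\Fq$.
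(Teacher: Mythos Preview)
The paper does not supply its own proof; the lemma is simply quoted from SGA~I, Expos\'e~XI, Corollaire~6.11. Your argument is a correct, self-contained proof of the special case needed here, and it proceeds along the same lines as the SGA treatment: produce the candidate finite cover $X_0'\to X_0$ by Stein factorisation, check \'etaleness over $U_0$ directly, and handle the point $x_0$ by base-changing to the henselisation and invoking proper base change for $\pi_1^{\et}$ (the equivalence $\mathbf{FEt}(\hat Y)\simeq\mathbf{FEt}(Z_0)$ for $\hat Y$ proper over a henselian local base), then compare $Y_0'$ and $h_0^{*}X_0'$ at the generic point.

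Two small remarks. In the sequence you write as $1\to\pi_1^{\et}(Z)\to\pi_1^{\et}(Z_0)\to\pi_1^{\et}(\Spec\Fq)\to 1$ you only use exactness in the middle (a cover of $Z_0$ that splits over $Z$ descends to $\Spec\Fq$); the ``$1\to$'' on the left is unnecessary and would require a separate justification. And when you pass from ``\'etale after base change to $\hat X$'' to ``\'etale near $x_0$'', it is worth saying explicitly that $\calO_{X_0,x_0}^{h}$ is faithfully flat over $\calO_{X_0,x_0}$, so that \'etaleness descends.
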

By the lemma, every étale Weil lisse sheaf $\calV_0$ over $Y_0$ which is trivial over $Z$ is the inverse image of an étale Weil lisse sheaf defined over $X_0$. 
Since we know the companions conjecture for $Y_0$, in order to deduce it for $X_0$ we have to verify the following property. 

\begin{itemize}
	\item[$\mathcal{P}(Z_0)$ :] For every pair $(\calV_0,\calW_0)$ of compatible absolutely irreducible Weil lisse sheaves with finite order determinant over $Y_0$, the sheaf $\calV_0$ is trivial over $Z$ if and only if the same is true for $\calW_0$.
\end{itemize}
If $Z_0\subseteq Y_0$ satisfies $\calP(Z_0)$ we say that $Z_0$ is a \textit{$\lambda$-uniform subvariety}. Thanks to Lemma \ref{i-contraction-fundamental-group-lemma} and the companions conjecture for smooth varieties, we have the following result.

\begin{theorem}\label{lamd-unif-comp:t}
If $Z_0\subseteq Y_0$ is $\lambda$-uniform, $X_0$ satisfies the companions conjecture.
\end{theorem}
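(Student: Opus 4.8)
The plan is to lift $\calV_0$ along the resolution $h_0\colon Y_0\to X_0$, apply Drinfeld's theorem on the \emph{smooth} variety $Y_0$, and then push the resulting companions back down to $X_0$ using the hypothesis that $Z_0$ is $\lambda$-uniform. Only one point will require genuine care, namely checking that the sheaves produced on $X_0$ are companions also at the singular point $x_0$, where $h_0$ is not an isomorphism; as we shall see, this is exactly the place where a property like $\calP(Z_0)$ becomes unavoidable.

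First I would take an irreducible Weil lisse $\Qlbar$-sheaf $\calV_0$ on $X_0$ with finite order determinant — reducing, after a standard argument via constant field extensions, to the case where $\calV_0$ is absolutely irreducible — and form $h_0^*\calV_0$ on $Y_0$. Since $h_{0*}\colon\pie(Y_0)\to\pie(X_0)$ is surjective by Lemma \ref{i-contraction-fundamental-group-lemma}, $h_0^*\calV_0$ is again absolutely irreducible, and it plainly has finite order determinant. Crucially, it is trivial over $Z$: its monodromy factors through $h_{0*}$, and the composite $\pie(Z)\xrightarrow{i_*}\pie(Y_0)\xrightarrow{h_{0*}}\pie(X_0)$ is trivial because $\Image(i_*)\subseteq\Ker(h_{0*})$, again by Lemma \ref{i-contraction-fundamental-group-lemma}. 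Now, $Y_0$ being smooth, Drinfeld's theorem gives — after replacing $E$ by a finite extension — for every finite place $\lambda\nmid p$ a Weil lisse $E_\lambda$-sheaf $\calW_0$ on $Y_0$ that is $E$-compatible with $h_0^*\calV_0$, again absolutely irreducible and of finite order determinant. Since $h_0^*\calV_0$ is trivial over $Z$ and $Z_0$ is $\lambda$-uniform, property $\calP(Z_0)$ forces $\calW_0$ to be trivial over $Z$ as well; hence, by Lemma \ref{i-contraction-fundamental-group-lemma} in the form recorded just after its statement, $\calW_0$ is the inverse image under $h_0$ of a Weil lisse $E_\lambda$-sheaf $\calV_0^{(\lambda)}$ on $X_0$ (the $E_\lambda$-linear and Weil structures descend along $\pie(X_0)=\pie(Y_0)/\Ker(h_{0*})$ with no difficulty).

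It then remains to verify that $\calV_0^{(\lambda)}$ is $E$-compatible with $\calV_0$, i.e. that their Frobenius characteristic polynomials coincide at every closed point of $X_0$. Over the dense open $U_0:=X_0\setminus\{x_0\}$, on which $h_0$ restricts to an isomorphism onto $Y_0\setminus Z_0$, this is immediate: a closed point of $U_0$ lifts to a closed point of $Y_0$ with the same residue field and the same Frobenius action on $\calW_0=h_0^*\calV_0^{(\lambda)}$ as on $\calV_0^{(\lambda)}$, while $\calW_0$ is a companion of $h_0^*\calV_0$. The only remaining point is $x_0$, and there I would argue globally: fixing an isomorphism $\iota\colon E_\lambda\iso\Qlbar$ realizing the compatibility, the two Weil $\Qlbar$-sheaves $\calV_0$ and $\iota_*\calV_0^{(\lambda)}$ on $X_0$ are semisimple — in fact irreducible, the second because its pullback along the surjection $h_{0*}$ is — and they have the same Frobenius characteristic polynomials at all closed points of $U_0$; by Chebotarev density together with Brauer--Nesbitt their restrictions to $U_0$ are isomorphic, and since $\pie(U_0)\twoheadrightarrow\pie(X_0)$ (because $X_0$ is normal and $U_0$ dense open) the restriction functor on lisse sheaves is fully faithful, whence $\calV_0\cong\iota_*\calV_0^{(\lambda)}$ already on $X_0$. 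In particular the characteristic polynomials match at $x_0$, and $\calV_0^{(\lambda)}$ is the desired companion. The one delicate step is this last comparison across the resolved point $x_0$; but with $\calP(Z_0)$ assumed, all the genuinely hard content of the companions conjecture for the singular variety $X_0$ has already been absorbed into that hypothesis, and the remainder is essentially formal.
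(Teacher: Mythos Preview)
Your overall strategy---pull back to the smooth resolution $Y_0$, apply Drinfeld there, use $\calP(Z_0)$ to descend the companions through Lemma~\ref{i-contraction-fundamental-group-lemma}, then check compatibility---is exactly the argument the paper has in mind (the paper states the theorem as an immediate consequence of Lemma~\ref{i-contraction-fundamental-group-lemma} and Drinfeld's theorem, without writing out a proof). Your reductions and the descent step are carried out correctly.

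There is, however, a genuine gap in your last paragraph. You write ``fixing an isomorphism $\iota\colon E_\lambda\iso\Qlbar$'' and then run a Chebotarev/Brauer--Nesbitt argument on the two $\Qlbar$-sheaves $\calV_0$ and $\iota_*\calV_0^{(\lambda)}$. When $\lambda$ lies over the \emph{same} rational prime $\ell$ this is fine (one has an embedding $E_\lambda\hookrightarrow\Qlbar$, not an isomorphism, but that suffices). But the whole point of the companions conjecture is to produce $E_\lambda$-sheaves for places $\lambda$ lying over primes $\ell'\neq\ell$, and in that case there is no field homomorphism $E_\lambda\to\Qlbar$ whatsoever, continuous or not. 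So $\iota_*\calV_0^{(\lambda)}$ is simply undefined, and your comparison at $x_0$ collapses precisely in the case that matters. The underlying difficulty is real: two continuous representations of $\pie(X_0)$ into $\GL_n(\Qlbar)$ and $\GL_n(\overline{\Q}_{\ell'})$ which agree on a dense set of Frobenii need not a priori agree elsewhere, because the two limiting processes take place in different completions of $E$.

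Here is one clean way to close the gap. Choose any smooth connected curve $C_0$ over $\Fq$ together with a non-constant morphism $g_0\colon C_0\to X_0$ whose image contains $x_0$ (such curves exist since $X_0$ is normal of positive dimension). Restrict both $\calV_0$ and $\calV_0^{(\lambda)}$ to $C_0$. By Lafforgue's theorem on the smooth curve $C_0$, the sheaf $\calV_0|_{C_0}$ admits an $E_\lambda$-companion $\calU_\lambda$ on \emph{all} of $C_0$. Now $\calV_0^{(\lambda)}|_{C_0}$ and $\calU_\lambda$ are two $E_\lambda$-sheaves---same prime $\ell'$---that are $E$-compatible on the dense open $g_0^{-1}(U_0)$; by Chebotarev and Brauer--Nesbitt over $E_\lambda$ their semi-simplifications agree there, and hence on all of $C_0$ since $C_0$ is normal. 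In particular their Frobenius characteristic polynomials agree at every preimage of $x_0$, which yields the missing equality at $x_0$ once one arranges (as one may, e.g.\ via a suitable arc at $x_0$ or after the constant-field extension you already allow) a point of $C_0$ above $x_0$ with trivial residue extension. Alternatively, after enlarging $\Fq$ so that $Z_0$ acquires a rational point $z_0$, compatibility of $h_0^*\calV_0$ and $\calW_0$ at $z_0\in Y_0$ directly gives equality of characteristic polynomials at $x_0=h_0(z_0)$, for every $\lambda$ simultaneously.
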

\subsection{Main results}

The aim of this text is to shed some new lights on the companions conjecture for normal varieties. For this scope we focus on $\lambda$-uniformity. We extend the notion of $\lambda$-uniform subvarieties of §\ref{obst:ss} with the one of \textit{$\lambda$-uniform morphisms} of varieties (Definition \ref{uniform-morphisms:d}). We intend to investigate the following conjecture. 
\begin{conjecture}[Conjecture \ref{unif:c}]\label{i-unif:c}
Let $Y_0$ and $Z_0$ be varieties over $\Fq$. If $Y_0$ is normal, every morphism $f_0:Z_0\to Y_0$ is $\lambda$-uniform (cf. Definition \ref{uniform-morphisms:d}).
\end{conjecture}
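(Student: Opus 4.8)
The plan is to recast the statement as a question about geometric monodromy kernels, settle the part that the trace formalism controls, and isolate the remaining difficulty. Unwinding Definition \ref{uniform-morphisms:d} (exactly as for $\calP(Z_0)$ above), what must be shown is that for every pair $(\calV_0,\calW_0)$ of $E$-compatible absolutely irreducible Weil lisse sheaves with finite order determinant on $Y_0$, the sheaf $f_0^*\calV_0$ is trivial over $Z:=Z_0\otimes_{\Fq}\F$ if and only if $f_0^*\calW_0$ is. Since triviality over $Z$ means precisely that the image of $\pi_1^{\et}(Z)$ in $\pi_1^{\et}(Y_0\otimes\F)$ lies in $\Ker(\rho_{\calV_0}^{\mathrm{geo}})$ for the geometric monodromy representation $\rho_{\calV_0}^{\mathrm{geo}}\colon\pi_1^{\et}(Y_0\otimes\F)\to\GL(\calV_0)$, and since the étale fundamental group of a variety is topologically generated by the images of the fundamental groups of the curves mapping into it, a Bertini-type irreducibility statement together with standard spreading-out reduces to the case where $Z_0$ is a (possibly singular) connected curve over $\Fq$; here one may enlarge $\Fq$ to a finite extension, which preserves $E$-compatibility and the $\iota$-purity used below (though not absolute irreducibility — only the former properties will be needed).

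I would first dispose of the part that behaves well. Let $n_0\colon\widetilde{Z}_0\to Z_0$ be the normalization, a smooth curve. If $f_0^*\calV_0$ is trivial over $Z$, then $(f_0 n_0)^*\calV_0$ is trivial over $\widetilde{Z}_0\otimes\F$, so its monodromy factors through $\pi_1^{\et}(\widetilde{Z}_0)/\pi_1^{\et}(\widetilde{Z}_0\otimes\F)=\Gal(\F/\Fq)$; thus it is the constant Weil sheaf attached to a single matrix $A\in\GL_r(\Qlbar)$, and $\Tr(\mathrm{Frob}_z\mid(f_0 n_0)^*\calV_0)=\Tr(A^{\deg z})$ for every closed point $z$. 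As $\widetilde{Z}_0$ has closed points of arbitrarily large degree, these traces determine the characteristic polynomial $P\in E[t]$ of $A$. By $E$-compatibility of the pullbacks, $(f_0 n_0)^*\calW_0$ has the same Frobenius trace function as the geometrically trivial semisimple sheaf $\bigoplus_i\mathcal{L}_{\eta_i}$, where $\mathcal{L}_{\eta_i}$ is the geometrically constant rank-one Weil sheaf whose geometric Frobenius acts by the root $\eta_i\in\Qlpbar$ of $P$. Since $\calW_0$ is $\iota$-pure of weight $0$ (by L. Lafforgue \cite{Laf} and Deligne), so is $(f_0 n_0)^*\calW_0$, hence it is geometrically semisimple on the smooth curve $\widetilde{Z}_0$ by Weil II \cite{Weil2}; comparing geometric Jordan--H\"older factors and using Chebotarev density of the Frobenius classes in the Weil group, $(f_0 n_0)^*\calW_0$ is forced to be geometrically trivial. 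By symmetry, $\calV_0$ and $\calW_0$ become trivial after pullback to $\widetilde{Z}_0$ under the same conditions.

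The main obstacle is that triviality of $f_0^*\calV_0$ over $Z$ is strictly stronger than triviality of $(f_0 n_0)^*\calV_0$ over $\widetilde{Z}_0\otimes\F$ once $Z_0$ is singular: for a nodal curve, $\pi_1^{\et}(\widetilde{Z}_0\otimes\F)\to\pi_1^{\et}(Z_0\otimes\F)$ is not surjective, the missing generator being the loop $\gamma$ around the node. When $(f_0 n_0)^*\calV_0$ is the constant sheaf $A$, the sheaf $f_0^*\calV_0$ is recovered by gluing it along $\widetilde{Z}_0$ through the element $g:=\rho_{\calV_0}^{\mathrm{geo}}(\gamma)$ at each singular point, and $f_0^*\calV_0$ is trivial over $Z$ exactly when every such $g$ equals $1$. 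Now $g$ is quasi-unipotent — the relation $\mathrm{Frob}\cdot\gamma\cdot\mathrm{Frob}^{-1}=\gamma^{q}$ in $\pi_1^{\et}(Z_0)$ forces its eigenvalues to be permuted by $x\mapsto x^{q}$ — but it is invisible to the Frobenius traces of $f_0^*\calV_0$, which only record powers of $A$. Hence the trace formalism cannot decide whether the condition $g=1$ is companion-invariant; what would suffice is to know that $E$-companions over a normal variety have the same geometric monodromy kernel inside $\pi_1^{\et}(Y_0\otimes\F)$, at least when tested against images of fundamental groups of singular curves. I expect this to be the crux of the problem. A natural attempt — combining $\mathrm{Frob}\cdot\gamma\cdot\mathrm{Frob}^{-1}=\gamma^{q}$ with weight--monodromy type estimates near the singular point in order to pin down the quasi-unipotent $g$ from Frobenius data — does not seem to go through in general, which is consistent with the statement being only conjectural.
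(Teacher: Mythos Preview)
The statement you are addressing is Conjecture~\ref{unif:c}, which the paper does \emph{not} prove; it is left open, and only the special cases listed in Theorem~\ref{i-main:t} are established. Your write-up is accordingly not a proof but an analysis of the obstruction, and in that capacity it runs parallel to the paper: the part that goes through --- passing to the normalization $\widetilde{Z}_0$ and using purity, geometric semisimplicity, and Chebotarev to see that companions become geometrically trivial on $\widetilde{Z}$ together --- is precisely Theorem~\ref{normal-uniform:t}, while the residual difficulty you isolate --- the extra loop class $\gamma$ contributed by a node, invisible to Frobenius traces --- is exactly the obstruction illustrated in Example~\ref{noda-curv:ex}.

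One genuine error in your last paragraph: the relation $\mathrm{Frob}\cdot\gamma\cdot\mathrm{Frob}^{-1}=\gamma^{q}$ does not hold for the nodal loop. You are importing the tame-inertia relation from the Galois group of a local field, but $\gamma$ is not an inertia element. For a split nodal curve the outer action of $\Gal(\F/\Fq)$ on $\pi_1^{\et}(Z,z)\simeq\hZ$ is trivial (Example~\ref{noda-curv:ex}), so in fact $\mathrm{Frob}\cdot\gamma\cdot\mathrm{Frob}^{-1}=\gamma$; hence there is no a priori quasi-unipotence constraint on $g=\rho(\gamma)$, and the obstruction is more rigid than your closing remarks suggest. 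A smaller point: the reduction to curves via ``$\pi_1^{\et}$ is topologically generated by curves'' is standard for smooth $Z_0$ by Bertini--Lefschetz, but for an arbitrary (possibly reducible, non-normal) $Z_0$ it needs an argument --- e.g.\ via Lemma~\ref{Stix:l} in the semi-stable case, or an induction through the normalization and the singular stratification in general --- which you should make explicit.
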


We shall verify Conjecture \ref{i-unif:c} in some particular cases.

\begin{theorem}[Theorem \ref{normal-uniform:t}, Theorem \ref{tree:t}, and 
	Theorem \ref{finite:t}]\label{i-main:t}
Let $f_0: Z_0 \to Y_0$ be a morphism of geometrically connected varieties over $\Fq$ with $Y_0$ normal. Let $z$ be a geometric point of $Z_0$. The morphism $f_0$ is $\lambda$-uniform in the following cases.
\begin{itemize}
\item[{\rm (i)}] If $Z_0$ is a normal variety.
\item[{\rm (ii)}] If $Z_0$ is a semi-stable curve with simply connected dual graph.
\item[{\rm (iii)}]  If the smallest closed normal subgroup of $\pi_1^{\et}(Y_0,f(z))$ containing the image of $\pi_1^{\et}(Z,z)$ is open inside $\pi_1^{\et}(Y,f(z))$.
\item[{\rm (iv)}] If $\pi_1^{\et}(Y,f(z))$ contains an open solvable profinite subgroup.
\end{itemize}
\end{theorem}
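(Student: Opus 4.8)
The plan is to reduce all four cases to one mechanism, applied on cases (i)--(iii) and failing (for an identifiable reason) on (iv). Write $\Pi:=\pi_1^{\et}(Y_0,f(z))$ and, for a Weil lisse sheaf $\calV_0$ on $Y_0$, let $\rho_\calV\colon\Pi\to\GL(V)$ be the associated representation. Since $\ker\rho_\calV$ is a closed normal subgroup of $\Pi$, the pullback $f_0^*\calV_0$ is trivial over $Z$ if and only if the image of $\pi_1^{\et}(Z,z)$ lies in $\ker\rho_\calV$, i.e. if and only if $N\subseteq\ker\rho_\calV$, where $N$ is the smallest closed normal subgroup of $\Pi$ containing that image. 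So $f_0$ is $\lambda$-uniform precisely when, for every compatible pair $(\calV_0,\calW_0)$ of absolutely irreducible Weil lisse sheaves with finite order determinant on $Y_0$, one has $N\subseteq\ker\rho_\calV\Leftrightarrow N\subseteq\ker\rho_\calW$. I would use throughout: such sheaves are pure of weight $0$ (\cite{Laf}, \cite{Del}); a pure lisse sheaf on a normal variety over a finite field is geometrically semisimple (\cite{Weil2}); and compatible sheaves have equal Frobenius characteristic polynomials at all closed points, so after any morphism their pullbacks have "the same" arithmetic semisimplification, by Chebotarev density and linear algebra in characteristic $0$.

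The key lemma I would prove is: if $\mathrm{Im}\big(\pi_1^{\et}(Z,z)\to\Pi\big)$ is topologically generated by a family of subgroups each of the form $\mathrm{Im}\big(\pi_1^{\et}(S_\alpha\otimes\F)\to\Pi\big)$ for a morphism $g_\alpha\colon S_\alpha\to Y_0$ from a geometrically connected \emph{normal} variety $S_\alpha$ over a finite extension of $\Fq$, then $f_0$ is $\lambda$-uniform. Indeed $N\subseteq\ker\rho_\calV$ holds iff each $g_\alpha^*\calV_0$ is geometrically trivial; but $g_\alpha^*\calV_0$ is pure of weight $0$ on the normal variety $S_\alpha$, hence geometrically semisimple, and a geometrically semisimple Weil sheaf is geometrically trivial iff its arithmetic semisimplification is (the two have the same geometric composition factors, and for a geometrically semisimple sheaf "all geometric composition factors trivial" is the same as "geometrically trivial"); since the arithmetic semisimplification of $g_\alpha^*\calV_0$ is read off from Frobenius characteristic polynomials, this condition holds for $g_\alpha^*\calV_0$ iff it holds for $g_\alpha^*\calW_0$. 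This settles (i) with the single $S_0=Z_0$; it settles (iii) with the single $S_0=Y_0'$, the finite étale cover of $Y_0$ with $\pi_1^{\et}(Y_0'\otimes\F)=N$, which exists, is geometrically connected over $\Fq$, and is normal because $N$ is normal in $\Pi$ and open in $\pi_1^{\et}(Y,f(z))$ (and $N\subseteq\ker\rho_\calV$ is exactly "$\calV_0|_{Y_0'}$ geometrically trivial"); and it settles (ii) by taking the $S_\alpha$ to be the normalizations of the irreducible components of $Z_0$ over suitable finite extensions of $\Fq$, since the hypothesis that the dual graph of $Z$ is a tree gives that $\pi_1^{\et}(Z,z)$ is the free profinite product of the fundamental groups of the normalizations of its components.

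For case (iv) the key lemma does not apply, and a separate argument is needed. Since $\pi_1^{\et}(Y,f(z))$ is virtually solvable, so is $\Pi$ (an extension of $\hZ$ by it), so for absolutely irreducible $\calV_0$ the monodromy group $G_\calV:=\overline{\rho_\calV(\Pi)}^{\,\mathrm{Zar}}$ has solvable identity component; by Lie--Kolchin and semisimplicity of $V$ as a $G_\calV$-module, $V|_{G_\calV^0}$ is a direct sum of characters, and since $\rho_\calV(\Pi)$ is a compact $\ell$-adic Lie group $\rho_\calV^{-1}(G_\calV^0)$ is open in $\Pi$, so $\calV_0$ restricted to the corresponding finite cover is a direct sum of rank-one Weil sheaves. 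Doing this for $\calV_0$ and $\calW_0$ and passing to a common finite cover $Y_0^{+}\to Y_0$ one gets $\calV_0|_{Y_0^{+}}=\bigoplus_j\chi_j$ and $\calW_0|_{Y_0^{+}}=\bigoplus_j\chi_j'$ with $\{\chi_j'\}$ the companions of $\{\chi_j\}$ (compare Frobenius characteristic polynomials on $Y_0^{+}$). One then checks that $f_0^*\calV_0$ is trivial over $Z$ iff each $\chi_j$ becomes geometrically trivial along the morphism to $Y_0^{+}$ that $f_0$ induces on the appropriate cover of $Z_0$; and for a rank-one Weil sheaf, being geometrically trivial over a variety means being pulled back from the field of constants, i.e. having all Frobenius eigenvalues equal to powers of a single scalar — a condition visibly preserved by taking companions. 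This gives the equivalence with "$f_0^*\calW_0$ trivial over $Z$".

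The main obstacle is case (iv). In (i)--(iii) the auxiliary normal varieties depend only on $f_0$, which is exactly why the triviality condition transfers verbatim to $\calW_0$; in (iv) the diagonalizing cover $Y_0^{+}$ and the decomposition $\bigoplus\chi_j$ depend on the sheaf, and one must reconcile the choices for $\calV_0$ and $\calW_0$ and control how $\mathrm{Im}(\pi_1^{\et}(Z,z))$ sits relative to the a priori sheaf-dependent subgroups $\rho_\calV^{-1}(G_\calV^0)$ and $\rho_\calW^{-1}(G_\calW^0)$. Concretely, one needs that the finite quotient $\Pi\twoheadrightarrow G_\calV/G_\calV^0$ has the same kernel for a sheaf and its companion — the "component group of the monodromy, as a quotient of $\Pi$" being a compatible invariant — which one obtains either by appealing to independence of $\ell$ of monodromy groups of companions in the style of Chin, or by a direct argument; making this bookkeeping clean, rather than any single deep input, is the crux of (iv).
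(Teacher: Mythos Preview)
Your treatment of (i) and (ii) is essentially the paper's: restrict to the normal pieces (the smooth locus, or the irreducible components via Stix's van Kampen theorem), use purity and Chebotarev/Lafforgue to compare Frobenius data, and reassemble. Your ``key lemma'' packages this nicely.

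For (iii) you take a genuinely different route. The paper invokes Larsen--Pink \cite{LP5}: once the geometric monodromy is finite, the geometric kernels $\Ker(\rho_{\lambda})\cap\pi_1^{\et}(Y,f(z))$ coincide for all $\lambda$, so $N$ lies in one iff in all. You instead realise $N$ as the geometric fundamental group of a connected finite \'etale cover $Y_0'\to Y_0$ (using that $N$ is normal in $\Pi$ and open in $\pi_1^{\et}(Y,f(z))$, together with the splitting of the resulting extension of $\hZ$ by a finite group), and then apply your key lemma with $S_0=Y_0'$. This is correct and more self-contained; it trades the Larsen--Pink citation for a short descent argument.

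For (iv) there is a real gap, and the paper's argument is much shorter than your sketch. You only use that the representation $V$ is semisimple as a module, which gives that $G_\calV$ is reductive and hence that $G_\calV^0$ is a torus; from there you try to diagonalise on a sheaf-dependent cover and run into the bookkeeping problems you describe. The missing observation is that for an \emph{untwisted} compatible system on a normal variety the \emph{geometric} monodromy groups $G_\lambda$ are not merely reductive but \emph{semi-simple} (this is \cite[Corollary~3.4.10]{Dad}, a consequence of purity). Since $\pi_1^{\et}(Y,f(z))$ is virtually solvable, $G_\lambda^0$ is a connected solvable semi-simple group, hence trivial; so each $G_\lambda$ is finite. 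This reduces (iv) to (iii) in one line, and your diagonalisation detour, with its dependence on a sheaf-specific cover $Y_0^+$ and the delicate comparison of $\rho_\calV^{-1}(G_\calV^0)$ with $\rho_\calW^{-1}(G_\calW^0)$, is entirely avoided. The ``component group as quotient of $\Pi$'' invariant you reach for at the end is precisely the Larsen--Pink input used in (iii); once you know the geometric monodromy is finite, that is all you need.
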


Combining the previous results we get the following.
\begin{theorem}Let $Y_0$ be a smooth geometrically connected variety over $\Fq$. If $X_0$ be a normal variety that can be written as a contraction of a geometrically connected subvariety $Z_0\subseteq Y_0$ satisfying one of the conditions of Theorem \ref{i-main:t}, then $X_0$ verifies Conjecture \ref{intro-companions-c}.   
\end{theorem}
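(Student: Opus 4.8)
The plan is to deduce the statement by chaining together Theorem~\ref{i-main:t} and Theorem~\ref{lamd-unif-comp:t}, so that most of the work has already been done. Writing $X_0$ as a contraction of $Z_0\subseteq Y_0$ (in the sense of §\ref{obst:ss}) unpacks to the following data: a proper morphism $h_0\colon Y_0\to X_0$ which restricts to an isomorphism over $X_0\setminus\{x_0\}$ for a single closed point $x_0$ and which carries $Z_0$ onto $x_0$. Since $Z_0$ is geometrically connected, $Z:=Z_0\otimes_{\Fq}\F$ is connected; and since $Z_0$ surjects onto $x_0=\Spec\kappa(x_0)$, so does $Z$ onto $x_0\otimes_{\Fq}\F=\coprod_{i=1}^{[\kappa(x_0):\Fq]}\Spec\F$ --- but a connected scheme surjects onto a finite disjoint union of points only when there is just one, so in fact $\kappa(x_0)=\Fq$. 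This is exactly the geometric setup of §\ref{obst:ss} (the remaining possibility, that $X_0$ is already smooth, being handled directly by Drinfeld's theorem), and $Z_0$, being geometrically connected inside the normal variety $Y_0$, meets the standing hypotheses of Theorem~\ref{i-main:t}.

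The first step is to apply Theorem~\ref{i-main:t} to the inclusion $Z_0\hookrightarrow Y_0$: as $Z_0\subseteq Y_0$ satisfies one of the four conditions (i)--(iv) by assumption, this morphism is $\lambda$-uniform, i.e. the property $\calP(Z_0)$ holds and $Z_0$ is a $\lambda$-uniform subvariety of $Y_0$. The second step is then immediate: Theorem~\ref{lamd-unif-comp:t} applies and gives that $X_0$ satisfies Conjecture~\ref{intro-companions-c}.

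For the reader's convenience I would also recall the short argument behind Theorem~\ref{lamd-unif-comp:t}, since it makes visible where the hypotheses enter. Let $\calV_0$ be an irreducible Weil lisse $\Qlbar$-sheaf on $X_0$ with finite order determinant and coefficient field $E$; since $\Qlbar$ is algebraically closed, $\calV_0$ is absolutely irreducible, and so is its pullback $h_0^*\calV_0$, because $h_{0*}\colon\pie(Y_0,z)\to\pie(X_0,h(z))$ is surjective by Lemma~\ref{i-contraction-fundamental-group-lemma}. Moreover $h_0^*\calV_0$ has finite order determinant, and its coefficient field is contained in $E$. Because the composite $Z_0\hookrightarrow Y_0\xrightarrow{h_0}X_0$ factors through $x_0=\Spec\Fq$, over $Z$ the sheaf $h_0^*\calV_0$ is pulled back from the geometric point $\Spec\F$, hence trivial. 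Now Drinfeld's theorem for the smooth variety $Y_0$ provides, after possibly replacing $E$ by a finite extension, for each finite place $\lambda'\nmid p$ an absolutely irreducible Weil lisse $E_{\lambda'}$-sheaf $\calW_0'$ on $Y_0$ that is $E$-compatible with $h_0^*\calV_0$; by $\calP(Z_0)$ it is again trivial over $Z$, so Lemma~\ref{i-contraction-fundamental-group-lemma} descends it to a Weil lisse $E_{\lambda'}$-sheaf $\calW_0$ on $X_0$ with $h_0^*\calW_0\cong\calW_0'$. Finally one checks that $\calW_0$ is $E$-compatible with $\calV_0$: over the closed points of $X_0\setminus\{x_0\}$ this is the compatibility already known on $Y_0$, where $h_0$ is an isomorphism, and at the remaining point $x_0$ --- which is $\Fq$-rational --- one recovers the Frobenius conjugacy class from its powers, read off at the closed points of $Z_0$, using that the determinant has finite order so that no eigenvalue vanishes. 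Letting $\lambda'$ range over all finite places away from $p$ produces the required family of companions on $X_0$.

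Since Theorems~\ref{i-main:t} and~\ref{lamd-unif-comp:t} and Drinfeld's theorem are all available, this corollary presents no genuine new obstacle. The points that deserve care are bookkeeping ones --- confirming that a contraction in the stated sense really supplies the input of §\ref{obst:ss}, in particular the equality $\kappa(x_0)=\Fq$ --- together with, inside the proof of Theorem~\ref{lamd-unif-comp:t}, the compatibility check at the contracted point $x_0$, which is the only step not given by pure pullback along $h_0$.
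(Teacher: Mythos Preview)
Your proposal is correct and matches the paper's approach exactly: the paper states this theorem without proof, as the evident combination of Theorem~\ref{i-main:t} and Theorem~\ref{lamd-unif-comp:t}, which is precisely what you do in your first two steps. The additional sketch of Theorem~\ref{lamd-unif-comp:t} goes beyond the paper (which likewise leaves it unproved); your flagged compatibility check at $x_0$ is indeed the only delicate point there, and your argument via powers of Frobenius read at closed points of $Z_0$ is the right idea.
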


An independent property we prove in this text is a property of invariance of $\lambda$-uniformity “under deformations” (Theorem \ref{homotopy:t}). This might be useful for further developments in the direction of Conjecture \ref{i-unif:c}. Besides, we present in §\ref{fina-comm:ss} a concrete example, proposed by de Jong, where we think it might be interesting to study Conjecture \ref{i-unif:c}.

\subsection{Acknowledgements}
I am grateful to my advisor Hélène Esnault for introducing me to this topic and for all the time we spent talking about this problem. I thank Emiliano Ambrosi, Raju Krishnamoorthy, and Jacob Stix for discussions and suggestions. Finally, I thank Piotr Achinger and Daniel Litt for sharing with me de Jong's Example \ref{dJ:ex}.

\subsection{Notation and conventions}
\subsubsection{}For us, a variety over a field $k$ is a separated scheme of finite type over $k$. We write $X_0,Y_0,Z_0,...$ for varieties over $\F_q$ and $X,Y,Z,...$ for the base change to $\F$. Further, we put a subscript $_0$ to indicate objects and morphisms defined over $\F_q$ and the suppression of this subscript shall mean that we are extending the scalars to $\F$. If $E$ is a number field we write $|E|_{\neq p}$ for the set of finite places of $E$ which do not divide $p$. For every $\lambda\in |E|_{\neq p}$, we denote by $E_{\lambda}$ the completion of $E$ with respect to $\lambda$.

\subsubsection{}
We use the notation for Weil lisse sheaves as in \cite[§2.2]{Dad}. We say that a Weil lisse $E_\lambda$-sheaf $\calV_0$ is \textit{split untwisted} if every subquotient of $\calV_0$ is absolutely irreducible and has finite order determinant. We say instead that $\calV_0$ is \textit{untwisted} if it is split untwisted after possibly extending $E_\lambda$. Recall that if $Y_0$ is a normal variety over $\Fq$, every untwisted Weil lisse sheaf over $Y_0$ is pure of weight $0$, \cite[Théorème 1.6]{Del}. Besides, by \cite[Proposition 1.3.14]{Weil2} and \cite[Proposition 3.1.16]{Dad}, every untwisted Weil lisse sheaf is étale.

\subsubsection{}

An \textit{$E$-compatible system} over $X_0$, denoted by $\underline{\calV_0}$, is a family $\{\calV_{\lambda,0}\}_{\lambda\in |E|_{\neq p}}$ where each $\calV_{\lambda,0}$ is an $E$-rational Weil lisse $E_{\lambda}$-sheaf and such that all sheaves are pairwise $E$-compatible. Each $\calV_{\lambda,0}$ is called the \textit{$\lambda$-component} of $\underline{\calV_0}$. We say that a compatible system is \textit{semi-simple, untwisted, split untwisted}, ... if each $\lambda$-component has the respective property.

\section{\texorpdfstring{$\lambda$}{}-uniform morphisms}




\label{the-conj:ss}
\subsection{General properties}
\begin{definition}
\label{uniform-systems:d}
Let $Z_0$ be a connected variety. A compatible system $\underline{\calV_0}$ over $Z_0$ is \textit{$\lambda$-uniform} if one of the following disjoint conditions is verified.

\begin{itemize}
\item [(i)] For every $\lambda\nmid p$, the lisse sheaf $\calV_{\lambda,0}$ is geometrically trivial.
\item [(ii)]For every $\lambda\nmid p$, the lisse sheaf $\calV_{\lambda,0}$ is geometrically non-trivial.
\end{itemize}
We say that $\underline{\calV_0}$ is \textit{strongly $\lambda$-uniform} if the dimension of $H^0(Z,\calV_{\lambda})$ does not depend on $\lambda$. Strongly $\lambda$-uniform compatible systems are clearly $\lambda$-uniform. If $Z_0$ is not connected we say that a compatible system is \textit{$\lambda$-uniform} (resp. \textit{strongly $\lambda$-uniform}) if it is $\lambda$-uniform (resp. strongly $\lambda$-uniform) over every connected component.
\end{definition}

\begin{theorem}
\label{normal-uniform:t}
Let $Z_0$ be a normal variety over $\F_q$. Every untwisted $E$-compatible system $\ucalVz$ over $Z_0$ is strongly $\lambda$-uniform.
\end{theorem}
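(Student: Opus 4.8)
The plan is to reduce the statement to the case of curves, where one can compare the cohomology of the different $\lambda$-components directly using the companions correspondence for smooth curves (Lafforgue). Fix an untwisted $E$-compatible system $\ucalVz = \{\calV_{\lambda,0}\}$ over the normal variety $Z_0$, and fix $\lambda, \lambda' \in |E|_{\neq p}$. We want $\dim H^0(Z,\calV_{\lambda}) = \dim H^0(Z,\calV_{\lambda'})$. Since each $\calV_{\lambda,0}$ is untwisted, it is pure of weight $0$ and étale by the remarks in the notation section, so after replacing $E$ by a finite extension we may assume each $\calV_{\lambda,0}$ is split untwisted; semisimplifying, we may even assume each $\calV_{\lambda,0}$ is a direct sum of absolutely irreducible lisse sheaves with finite order determinant, and the $H^0$ counts the multiplicity of the trivial summand. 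So the real content is: for a single absolutely irreducible $\calV_{\lambda,0}$ with finite order determinant that is \emph{not} geometrically trivial, every $E$-companion $\calV_{\lambda',0}$ is also not geometrically trivial; equivalently $H^0(Z,\calV_\lambda) \neq 0 \iff H^0(Z,\calV_{\lambda'}) \neq 0$, and then a multiplicity/dimension bookkeeping finishes the strong form.

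The key step is a curve-slicing argument. Because $Z_0$ is normal, hence geometrically connected after a finite extension of $\Fq$ (replace $\Fq$ by the algebraic closure of $\Fq$ in the function field, harmless for the statement since it only changes $Z$ by nothing), I would invoke a Bertini-type theorem: there exists a smooth geometrically connected curve $C_0$ over a finite extension of $\Fq$ together with a morphism $C_0 \to Z_0$ such that the induced map $\pi_1^{\et}(C) \to \pi_1^{\et}(Z)$ has image whose Zariski closure in the monodromy group of $\calV_\lambda$ is all of the geometric monodromy group — this is exactly the kind of statement used by Drinfeld and by Esnault–Kerz, relying on the fact that for a normal variety one can find a curve through a general point on which the monodromy is preserved (for instance by taking a general complete intersection curve in a compactification and using that $\pi_1^{\et}$ surjects). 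Granting this, $\calV_{\lambda,0}$ is geometrically trivial if and only if its pullback to $C$ is geometrically trivial, i.e. if and only if $H^0(C, \calV_\lambda|_C) = \dim \calV_\lambda$; and likewise for $\lambda'$. Now $\calV_{\lambda,0}|_{C_0}$ and $\calV_{\lambda',0}|_{C_0}$ form a compatible pair on the smooth curve $C_0$, so by Lafforgue their geometric monodromy representations have matching "invariant dimension": the dimension of $H^0(C,\calV_\lambda|_C)$ equals $\dim H^0(C,\calV_{\lambda'}|_C)$, because the corresponding automorphic/Galois representations are companions and the multiplicity of the trivial constituent is encoded in the Frobenius traces (one can see it via $\frac{1}{|G|}\sum \Tr$ over a suitable finite quotient, or more robustly: the trivial subrepresentation's dimension is a spectral invariant read off from the $L$-function, which is determined by the compatible system). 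This gives $\dim H^0(Z,\calV_\lambda) = \dim H^0(Z,\calV_{\lambda'})$.

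The main obstacle I anticipate is making the curve-slicing step genuinely work for an \emph{arbitrary} normal variety rather than a smooth one: the clean Bertini/Lefschetz statements about surjectivity of $\pi_1^{\et}$ of a general hyperplane section are usually stated in the smooth (or at least quasi-projective, suitably general) setting, and for singular normal $Z_0$ one must pass to a compactification, normalize, and control how the fundamental group and the lisse sheaf behave — in particular one needs the chosen curve to avoid the bad locus and to still see the full geometric monodromy. One way around this is to first pull $\calV_\lambda$ back along a resolution or alteration $Z_0' \to Z_0$ with $Z_0'$ smooth: since $Z_0$ is normal, $\pi_1^{\et}(Z') \to \pi_1^{\et}(Z)$ is surjective (a dominant map from a smooth variety to a normal one induces a surjection on $\pi_1^{\et}$ up to the connectedness issue already handled), so geometric triviality of $\calV_\lambda$ over $Z$ is equivalent to that over $Z'$, reducing everything to the smooth case where Drinfeld's techniques and the standard curve-slicing apply verbatim. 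I would structure the write-up around this reduction to smooth $Z_0'$ followed by the curve comparison, and expect the delicate bookkeeping to be in checking that "strongly" (equality of dimensions, not just the trivial/non-trivial dichotomy) survives the reduction, which it does because each of the maps involved induces an honest isomorphism on $H^0$ of any lisse sheaf by surjectivity on $\pi_1^{\et}$.
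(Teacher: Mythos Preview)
Your approach is correct and would go through, but it is considerably more elaborate than the paper's. The paper dispenses with both the alteration and the explicit curve-slicing: it simply passes to the smooth locus $U_0\subseteq Z_0$, uses normality of $Z_0$ to get $\pi_1^{\et}(U)\twoheadrightarrow\pi_1^{\et}(Z)$ and hence $H^0(Z,\calV_\lambda)=H^0(U,\calV_\lambda|_U)$ for every $\lambda$, and then cites Lafforgue's Corollaire~VI.3 directly for the smooth variety $U_0$ to obtain $\lambda$-independence of this dimension. So the reduction from normal to smooth is a one-line open-immersion argument rather than an alteration, and the curve comparison is absorbed into the cited result rather than reproduced. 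Your route buys nothing extra here, though the ingredients you assemble (monodromy-preserving curves, Lafforgue on curves) are precisely what lies behind that corollary. One point to tighten in your write-up: the curve $C_0$ must be chosen so that it captures the full geometric monodromy for \emph{both} $\calV_{\lambda}$ and $\calV_{\lambda'}$ simultaneously, otherwise the chain $H^0(Z,\calV_\lambda)=H^0(C,\calV_\lambda|_C)=H^0(C,\calV_{\lambda'}|_C)=H^0(Z,\calV_{\lambda'})$ breaks at the last equality; this is easily arranged since you are comparing only finitely many $\lambda$ at a time, but it should be said.
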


\begin{proof}
After extending the base field, we may assume that $Z_0$ is geometrically connected. As said before, since $Z_0$ is a normal variety, each $\lambda$-component of $\ucalVz$ is pure of weight $0$, \cite[Théorème 1.6]{Del}. Also, if $U_0$ is the smooth locus of $Z_0$, the \'etale fundamental group of $U$ maps surjectively onto the \'etale fundamental group of $Z$. Therefore, we have a canonical isomorphism $$H^0(Z,\calV_{\lambda})=H^0(U,\calV_{\lambda}|_U)$$ for every $\lambda$.  Applying \cite[Cor. VI.3]{Laf} to the $E$-compatible system $\{\calV_{\lambda,0}|_{U_0}\}_{\lambda\in |E|}$, we obtain the desired result.
\end{proof}

If we do not assume $Z_0$ normal, Theorem \ref{normal-uniform:t} becomes false in general (Example \ref{noda-curv:ex}). The issue is the lack of a Chebotarev density theorem for non-normal varieties. In what follows we want to understand if a weaker variant of Theorem \ref{normal-uniform:t} is still true for singular varieties.

\begin{definition}
\label{uniform-morphisms:d}
Let $f_0:Z_0\to Y_0$ be a morphism of varieties over $\Fq$. We say that $f_0$ is a \textit{$\lambda$-uniform morphism} if for every untwisted compatible system $\underline{\calV_0}$ over $Y_0$, the pullback $f_0^*\underline{\calV_0}$ is $\lambda$-uniform. If $f_0$ is a closed immersion we say that $Z_0$ is a \textit{$\lambda$-uniform subvariety} of $Y_0$.
\end{definition}
\begin{conjecture}\label{unif:c}
	Let $Y_0$ and $Z_0$ be varieties over $\Fq$. If $Y_0$ is normal, every morphism $f_0:Z_0\to Y_0$ is $\lambda$-uniform.
\end{conjecture}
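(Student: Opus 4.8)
The plan is to rephrase $\lambda$-uniformity in terms of fundamental groups and then to bootstrap from Theorem~\ref{normal-uniform:t}. After extending the base field and passing to components, assume $Y_0$ and $Z_0$ geometrically connected; write $y=f(z)$, $G_0=\pi_1^{\et}(Y_0,y)$ and $G=\pi_1^{\et}(Y,y)$, so $G\trianglelefteq G_0$. Since $Y_0$ is normal, an untwisted compatible system $\underline{\calV_0}$ on $Y_0$ is étale and pure of weight $0$, so each $\lambda$-component is a continuous representation $\rho_\lambda\colon G_0\to\GL(\calV_\lambda)$ whose restriction to $G$ is the geometric monodromy. Let $H\subseteq G$ be the closed image of $\pi_1^{\et}(Z,z)$ and let $N$ be the smallest closed normal subgroup of $G_0$ containing $H$ (it lies in $G$ because $G\trianglelefteq G_0$, and it depends only on $f_0$). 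Then $f_0^*\calV_{\lambda,0}$ is geometrically trivial on $Z$ if and only if $\rho_\lambda(H)=1$, equivalently $\rho_\lambda(N)=1$. So Conjecture~\ref{unif:c} is equivalent to the statement that, for every untwisted compatible system $\{\rho_\lambda\}$ on the normal variety $Y_0$, whether $\rho_\lambda$ kills the fixed normal subgroup $N$ is independent of $\lambda$.

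\emph{Two reductions.} First, if $N$ is open in $G$, then, since $N\trianglelefteq G_0$, the group $N$ is the geometric fundamental group of a connected finite étale cover $\pi\colon\widetilde Y_0\to Y_0$, itself a normal variety; here $\pi^*\calV_{\lambda,0}$ is geometrically trivial exactly when $\rho_\lambda(N)=1$, i.e.\ when $\rho_\lambda(H)=1$, and case~(i) of Theorem~\ref{i-main:t} applied to the morphism $\pi$ shows this is $\lambda$-independent; this recovers case~(iii). Second, one may always reduce to $Z_0$ a curve: the group $\pi_1^{\et}(Z)$ is topologically generated by the images of the $\pi_1^{\et}$ of curves mapping to $Z_0$ (after allowing finite extensions of $\Fq$), and $f_0^*\calV_{\lambda,0}$ is geometrically trivial on $Z$ if and only if its pullback along every such curve is; since a conjunction of $\lambda$-independent conditions is again $\lambda$-independent, the case of curves implies the general conjecture.

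\emph{The case of curves.} So suppose $Z_0$ is a connected curve, with normalisation $\nu\colon\widetilde Z_0\to Z_0$. By invariance of $\pi_1^{\et}$ under universal homeomorphisms, only the node-type singularities of $Z_0$ contribute to $\pi_1^{\et}(Z)$; besides the images of the $\pi_1^{\et}$ of the normalised components $\widetilde Z_i$, these produce finitely many further topological generators $\gamma_1,\dots,\gamma_r\in\pi_1^{\et}(Z)$, one for each independent cycle of the dual graph. Thus $\rho_\lambda$ kills $H$ if and only if it kills each $\mathrm{im}(\pi_1^{\et}(\widetilde Z_i)\to G)$ \emph{and} kills the images of $\gamma_1,\dots,\gamma_r$. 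The first condition is $\lambda$-independent by case~(i) of Theorem~\ref{i-main:t} applied to the smooth curves $\widetilde Z_i$, and when the dual graph is a tree there are no $\gamma_j$, which is case~(ii). Hence the conjecture for curves reduces to the following: for an untwisted compatible system on the normal variety $Y_0$, whether $\rho_\lambda$ kills the normal closure of the finitely many specific elements $g_*\gamma_1,\dots,g_*\gamma_r\in G_0$ is independent of $\lambda$.

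\emph{The obstacle.} This residual point is where the difficulty concentrates, and I do not expect it to be routine: it requires controlling the images under $\rho_\lambda$ of individual elements of $\pi_1^{\et}(Y_0)$ uniformly across companions, and on a non-normal source there is no Chebotarev density theorem to convert this into a statement about Frobenius traces — which is exactly the failure behind Example~\ref{noda-curv:ex}. The inputs I would try to feed in are the independence of $\ell$ of geometric monodromy groups for smooth varieties, applied to the smooth locus $U_0=Y_0^{\mathrm{sm}}$ (using that $\pi_1^{\et}(U)$ surjects onto $\pi_1^{\et}(Y)$) in order to control the ``size'' of $\overline{\rho_\lambda(N)}$ as $\lambda$ varies, together with Drinfeld's companions theorem \cite{Dri} for $U_0$ and the compatibility of the companion correspondence with pullback. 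But bridging from ``the geometric monodromy group is independent of $\lambda$'' to ``this particular normal subgroup $N$ does or does not lie in $\ker\rho_\lambda$'' is precisely the gap, and I see no way to close it without a genuinely new idea — which is why only the special cases collected in Theorem~\ref{i-main:t} are established here, not the full conjecture.
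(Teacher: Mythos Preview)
The statement you are attempting is labelled \emph{Conjecture} in the paper and is not proven there; the paper only establishes the special cases gathered in Theorem~\ref{i-main:t}. Your proposal is therefore not to be compared against a proof in the paper, because there is none. You acknowledge this yourself in the final paragraph, so there is no misconception to correct on that score.

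That said, your write-up is a useful road map and lines up well with what the paper actually does. The group-theoretic reformulation in your first paragraph is exactly Lemma~\ref{uniform-group-property:l}. Your reduction to curves and the treatment of the tree case reproduce Theorem~\ref{tree:t} via Lemma~\ref{Stix:l}. Your handling of the ``$N$ open'' case is a pleasant variant: instead of invoking Larsen--Pink \cite[Proposition~2.2]{LP5} as the paper does in Theorem~\ref{finite:t}(i), you realise $N$ as the geometric fundamental group of a connected finite \'etale cover $\widetilde Y_0\to Y_0$ and feed $\pi$ back into Theorem~\ref{normal-uniform:t}; one should check that the pullback of an untwisted system along a finite \'etale map stays untwisted (it does, via purity of weight~$0$), but the idea is sound and arguably more transparent than the paper's route. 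Two small caveats: the claim that $\pi_1^{\et}(Z)$ is topologically generated by images of fundamental groups of curves deserves a reference, and your passage from a general curve to one whose extra generators come only from cycles of the dual graph implicitly uses a semi-stable (or seminormal) model, which is harmless but should be said.

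The genuine gap is the one you name: controlling whether $\rho_\lambda$ kills the normal closure of finitely many specific elements $g_*\gamma_j\in\pi_1^{\et}(Y_0)$, uniformly in $\lambda$. Nothing in the companions machinery or in $\ell$-independence of monodromy groups gives this, precisely because those results are about Frobenius conjugacy classes and Zariski closures, not about individual (non-Frobenius) elements; Example~\ref{noda-curv:ex} shows that without the ambient normal $Y_0$ the statement is simply false. So your final diagnosis is correct: this is the open point, and the paper leaves it open too.
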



	

If $Y_0$ is smooth we have an equivalent definition of a $\lambda$-uniform morphism. This other definition uses the \textit{Grothendieck semirings} of Weil lisse sheaves (cf. \cite[§4.3.3]{Dad}).
\begin{notation}
As in [\textit{ibid.}], we write $\KK(\calV_{\lambda,0})$ for the Grothendieck semiring of $\langle\calV_{\lambda,0} \rangle$, the Tannakian category spanned by $\calV_{\lambda,0}$. We also use an analogous notation for the lisse sheaf $\calV_\lambda$ over $Y$. Besides, we write $\KK^+(\calV_{\lambda})^0\subseteq \KK^+(\calV_{\lambda})$ for the sub-semiring of (geometrically) trivial lisse sheaves in $\langle\calV_{\lambda} \rangle$.
\end{notation}
If $\ucalVz$ is a split untwisted $E$-compatible system over a smooth variety, thanks to the companions conjecture, for every $\lambda,\lambda'\in |E|_{\neq p}$ we have a natural isomorphism of semirings $\KK^+(\calV_{\lambda,0})\iso\KK^+(\calV_{\lambda',0}),$ \cite[Proposition 4.3.6]{Dad}.
\begin{proposition}
	If $Y_0$ is a smooth variety, a morphism $f_0:Z_0\to Y_0$ is $\lambda$-uniform if and only if for every split untwisted $E$-compatible system $\underline{\calV_0}$ over $Y_0$ the preimage of $\KK^+(f^*\calV_{\lambda})^0$ via $\KK^+(\calV_{\lambda,0})\to \KK^+(f^*\calV_{\lambda})$ does not depend on $\lambda$.
\end{proposition}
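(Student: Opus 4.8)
The plan is to unwind both sides of the asserted equivalence and show they say the same thing. First I would observe that the statement only concerns split untwisted compatible systems, so by Definition \ref{uniform-morphisms:d} I should check that $\lambda$-uniformity of $f_0$ — a priori a condition on \emph{all} untwisted compatible systems over $Y_0$ — can be tested on split untwisted ones and, in fact, on the individual geometrically irreducible constituents. Indeed, any untwisted $\underline{\calV_0}$ becomes split untwisted after a finite extension of $E_\lambda$, and geometric triviality of $\calV_{\lambda,0}$ is unaffected by such an extension; moreover $\calV_{\lambda,0}$ is geometrically trivial if and only if each of its geometrically irreducible subquotients is, and compatibility is preserved by passing to subquotients (here one uses that over a smooth variety the companions conjecture, via \cite[Proposition 4.3.6]{Dad}, matches up the Grothendieck semirings, so "being a constituent of $\calV_{\lambda,0}$" is a $\lambda$-independent notion). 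Hence $f_0$ is $\lambda$-uniform iff for every split untwisted $\underline{\calV_0}$ and every geometrically irreducible summand, its pullback is geometrically trivial for all $\lambda$ or for no $\lambda$.

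Next I would translate this into the semiring language. Fix a split untwisted $E$-compatible system $\underline{\calV_0}$ over $Y_0$. The elements of $\KK^+(\calV_{\lambda,0})$ correspond to (isomorphism classes of) objects of $\langle\calV_{\lambda,0}\rangle$, and under the canonical isomorphism $\KK^+(\calV_{\lambda,0})\iso\KK^+(\calV_{\lambda',0})$ the indecomposable classes — equivalently the irreducible Weil sheaves appearing — are matched in a way compatible with their geometric semisimplifications. An element $v\in\KK^+(\calV_{\lambda,0})$ lies in the preimage of $\KK^+(f^*\calV_\lambda)^0$ exactly when the corresponding Weil sheaf on $Z$ becomes geometrically trivial, i.e.\ when all geometric constituents of $f^*$ of that sheaf are trivial. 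Because the semiring is generated by the classes of the geometrically irreducible constituents of $\calV_{\lambda,0}$, the subset "preimage of $\KK^+(f^*\calV_\lambda)^0$" is determined by which of those finitely many generators pull back to something geometrically trivial on $Z$. So the $\lambda$-independence of this preimage is equivalent to: for each geometrically irreducible constituent $\calW_{\lambda,0}$ of $\calV_{\lambda,0}$, the truth value of "$f^*\calW_\lambda$ is geometrically trivial" does not depend on $\lambda$ — which, after letting $\underline{\calV_0}$ range over all split untwisted systems, is precisely the reformulation of $\lambda$-uniformity obtained in the first step.

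For the converse direction I would run the same dictionary backwards: if the preimage is $\lambda$-independent for every split untwisted $\underline{\calV_0}$, then applying this to the system generated by an arbitrary geometrically irreducible constituent (which is itself split untwisted) shows directly that $f_0^*\underline{\calV_0}$ satisfies alternative (i) or (ii) of Definition \ref{uniform-systems:d}, and then the reduction of the first paragraph propagates this to all untwisted systems. The main obstacle — really the only subtle point — is the first paragraph: one must be careful that "geometric triviality of a constituent" is genuinely a $\lambda$-independent predicate, which is exactly where the smoothness of $Y_0$ enters, via Drinfeld's companions theorem and the resulting semiring isomorphism $\KK^+(\calV_{\lambda,0})\iso\KK^+(\calV_{\lambda',0})$; without it the constituents on different $\lambda$-sides need not correspond, and neither formulation would make sense. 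Everything after that is bookkeeping with the Grothendieck semiring.
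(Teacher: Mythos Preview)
Your overall strategy matches the paper's, but the argument in your second paragraph contains a genuine error that breaks the implication from $\lambda$-uniformity to $\lambda$-independence of the preimage. You assert that $\KK^+(\calV_{\lambda,0})$ is ``generated by the classes of the geometrically irreducible constituents of $\calV_{\lambda,0}$'' and then speak of ``those finitely many generators''. This is false: $\KK^+(\calV_{\lambda,0})$ is the Grothendieck semiring of the \emph{entire Tannakian category} $\langle\calV_{\lambda,0}\rangle$, whose simple objects are typically infinite in number and are not obtainable from the constituents of $\calV_{\lambda,0}$ by sums and tensor products alone (subtraction is unavailable in a semiring, so e.g.\ $[\mathrm{Sym}^2\calV_{\lambda,0}]$ is not in the sub-semiring generated by $[\calV_{\lambda,0}]$). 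Consequently, knowing for each constituent of $\calV_{\lambda,0}$ whether its pullback is geometrically trivial does \emph{not} determine the full preimage of $\KK^+(f^*\calV_\lambda)^0$; it only tells you whether the preimage is all of $\KK^+(\calV_{\lambda,0})$ or not.

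The paper closes this gap with exactly the ingredient your argument is missing: by \cite[Theorem~3.4.7]{Dad}, \emph{every} object $\calW_{\lambda,0}\in\langle\calV_{\lambda,0}\rangle$ is itself untwisted, hence (by companions over the smooth $Y_0$) sits in its own untwisted compatible system $\underline{\calW_0}$. Applying $\lambda$-uniformity of $f_0$ directly to $\underline{\calW_0}$ shows that membership of $[\calW_{\lambda,0}]$ in the preimage is $\lambda$-independent, with no need to express $\calW_{\lambda,0}$ in terms of the constituents of $\calV_{\lambda,0}$. Your first-paragraph reduction and your treatment of the converse direction are fine; replacing the faulty ``finitely many generators'' step by this observation would make your proof correct and essentially identical to the paper's.
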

\begin{proof}
	Note that by \cite[Corollary 3.5.2.(ii)]{Dad}, every $\lambda$-component of $\underline{\calV_0}$ is geometrically semi-simple. Therefore, if we take the semi-simplification of each $\lambda$-component we do not change the isomorphism classes of the lisse sheaves over $Y$. This shows that we may assume that each $\lambda$-component of $\underline{\calV_0}$ is semi-simple.
	Next, we observe that the preimage of $\KK^+(f^*\calV_{\lambda})^0$ is precisely the sub-semiring of $\KK^+(\calV_{\lambda,0})$ given by those semi-simple Weil lisse sheaves in $\langle\calV_{\lambda,0} \rangle$ which are geometrically trivial over $Z_0$.

	Suppose that the preimage of $\KK^+(f^*\calV_{\lambda})^0$ does not depend on $\lambda$. Looking at the class of $[\calV_{\lambda,0}]\in \KK^+(\calV_{\lambda,0})$ when $\lambda$ varies we deduce that $f_0$ is $\lambda$-uniform. For the converse, if $\calV_{\lambda,0}$ is untwisted, every object in $\langle\calV_{\lambda,0} \rangle$ is untwisted as well, \cite[Theorem 3.4.7]{Dad}. Therefore, every $\calW_{\lambda,0}\in \langle\calV_{\lambda,0} \rangle$ sits in an untwisted compatible system $\underline{\calW_0}$ over $Y_0$ which by assumption is $\lambda$-uniform when restricted to $Z_0$. This concludes the proof.
\end{proof}

\subsection{Homotopic invariance}

Let us look more closely at $\lambda$-uniform morphisms by analysing the relation with the induced morphism on fundamental groups. 

\subsubsection{}

Let $f_0:Z_0\to Y_0$ be a morphism of geometrically connected varieties over $\Fq$ with $Y_0$ normal. If we choose a geometric point $z$ of $Z_0$ we have a morphism $$\pi_1^{\et}(Z,z)\xrightarrow{f_{*}} \pi_1^{\et}(Y_0,f(z)).$$
	For every étale compatible system $\underline{\calV_0}$ over $Y_0$ we denote by $\{\rho_{\lambda,0}\}_{\lambda \in |E|_{\neq p}}$ the associated family of $\ell$-adic representations of $\pi_1^{\et}(Y_0,f(z))$. Let $\overline{\Image(f_*)}$ be the smallest normal closed subgroup of $\pi_1^{\et}(Y_0,f(z))$ containing the image of $f_*$. The following lemma is a direct consequence of the definition of a $\lambda$-uniform morphism.

\begin{lemma}\label{uniform-group-property:l}A morphism $f_0$ is $\lambda$-uniform if and only if for every untwisted compatible system over $Y_0$, if $\overline{\Image(f_*)}\subseteq \Ker(\rho_{\lambda,0})$ for one $\lambda$ then the same is true for every other $\lambda\in |E|$. In particular, the property of a morphism of being $\lambda$-uniform depends only on the inclusion $\overline{\Image(f_*)}\subseteq \pi_1^{\et}(Y_0,y)$ as topological groups together with the assignment of the conjugacy classes of the Frobenii at closed points of $\pi_1^{\et}(Y_0,y)$ and their degrees.
\end{lemma}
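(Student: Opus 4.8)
The statement is an unwinding of Definitions \ref{uniform-morphisms:d} and \ref{uniform-systems:d}, and the plan is to rewrite each of the conditions occurring there as a condition on the family $\{\rho_{\lambda,0}\}$ of representations of $\pi_1^{\et}(Y_0,f(z))$ and on the subgroup $\overline{\Image(f_*)}$.

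First I would recall that, by Definition \ref{uniform-morphisms:d}, the morphism $f_0$ is $\lambda$-uniform exactly when for every untwisted compatible system $\underline{\calV_0}$ over $Y_0$ the pullback $f_0^*\underline{\calV_0}$ is a $\lambda$-uniform compatible system over $Z_0$. Since $Y_0$ is normal, each component $\calV_{\lambda,0}$ is étale (\cite[Proposition 1.3.14]{Weil2}, \cite[Proposition 3.1.16]{Dad}), hence corresponds to the representation $\rho_{\lambda,0}$, and $f_0^*\calV_{\lambda,0}$ corresponds to $\rho_{\lambda,0}\circ f_*$. As $Z_0$ is geometrically connected, $Z$ is connected, so by Definition \ref{uniform-systems:d} the pulled-back system is $\lambda$-uniform precisely when $f_0^*\calV_{\lambda,0}$ is geometrically trivial for all $\lambda$ or for none. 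Now $f_0^*\calV_{\lambda,0}$ is geometrically trivial over $Z_0$ if and only if $\rho_{\lambda,0}\circ f_*$ is trivial on $\pi_1^{\et}(Z,z)$, that is, if and only if $\Image(f_*)\subseteq\Ker(\rho_{\lambda,0})$; and since $\Ker(\rho_{\lambda,0})$ is a closed normal subgroup of $\pi_1^{\et}(Y_0,f(z))$, this is in turn equivalent to $\overline{\Image(f_*)}\subseteq\Ker(\rho_{\lambda,0})$. Combining these reductions gives the first assertion.

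For the ``in particular'', the plan is simply to observe which data the criterion just produced actually uses. The condition $\overline{\Image(f_*)}\subseteq\Ker(\rho_{\lambda,0})$ depends only on the subgroup $\overline{\Image(f_*)}\subseteq\pi_1^{\et}(Y_0,y)$ and on the kernels $\Ker(\rho_{\lambda,0})$. On the other side, an untwisted $E$-compatible system over $Y_0$ is a family $\{\rho_{\lambda,0}\}$ of continuous representations of $\pi_1^{\et}(Y_0,y)$; being untwisted (absolute irreducibility of all subquotients, finiteness of the order of the determinant) is a property of these homomorphisms of topological groups, and being $E$-compatible means, by the definition of $E$-compatibility (cf. \cite[Definition 3.1.15]{Dad}), that the characteristic polynomials of the Frobenius elements $\mathrm{Frob}_x$ at the closed points $x\in|Y_0|$ agree, which only involves the conjugacy classes of these Frobenii together with their degrees $\deg(x)$. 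Hence an isomorphism of topological groups $\pi_1^{\et}(Y_0,y)\iso\pi_1^{\et}(Y_0',y')$ matching $\overline{\Image(f_*)}$ with $\overline{\Image(f'_*)}$ and the conjugacy classes of the Frobenii (with their degrees) with those of the Frobenii transports the criterion, so $f_0$ is $\lambda$-uniform if and only if $f_0'$ is.

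The argument is purely formal. The only place where anything beyond definition-chasing enters is the identification of untwisted Weil lisse sheaves with representations of the étale fundamental group (rather than of the full Weil group), which is licensed by the fact that an untwisted Weil lisse sheaf over a normal variety is automatically étale; so I do not expect a real obstacle here.
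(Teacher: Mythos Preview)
Your argument is correct and is precisely the unwinding the paper has in mind: the paper states the lemma as ``a direct consequence of the definition of a $\lambda$-uniform morphism'' and gives no further proof. Your two paragraphs spell out exactly the definition-chasing (geometric triviality $\Leftrightarrow$ $\Image(f_*)\subseteq\Ker(\rho_{\lambda,0})\Leftrightarrow \overline{\Image(f_*)}\subseteq\Ker(\rho_{\lambda,0})$, and the observation that the data of an untwisted compatible system is encoded in the topological group plus Frobenius conjugacy classes with degrees) that justify this assertion.
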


As a consequence of the previous lemma, we prove an “homotopic invariance” of $\lambda$-uniformity. Let $T_0$ and $S_0$ be geometrically connected varieties over $\F_q$ and $h_0: T_0\to S_0$ a proper and flat morphism with connected and reduced geometric fibres. Let $s_0$ and $s'_0$ be closed points of $S_0$ and write $\iota_0:Z_0\hookrightarrow T_0$ and $\iota'_0:Z'_0\hookrightarrow T_0$ for the closed immersions of the fibres of $h_0$ above $s_0$ and $s'_0$ respectively.
\begin{theorem}
\label{homotopy:t}For every morphism $\widetilde{f}_0: T_0\to Y_0$, the restriction $f_0:=\widetilde{f}_0|_{Z_0}$ is $\lambda$-uniform if and only if $f'_0:=\widetilde{f}_0|_{Z'_0}$ is $\lambda$-uniform.
\end{theorem}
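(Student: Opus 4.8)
The strategy is to reduce the statement, via Lemma~\ref{uniform-group-property:l}, to a purely group-theoretic assertion: since the target $Y_0$ is the same for $f_0$ and $f'_0$, and $\lambda$-uniformity of a morphism into $Y_0$ depends only on the closed subgroup $\overline{\Image(f_*)}\subseteq\pi_1^{\et}(Y_0,-)$ together with the (fixed) Frobenius data of $Y_0$, it suffices to show that $\overline{\Image(f_*)}$ and $\overline{\Image(f'_*)}$ are conjugate subgroups of $\pi_1^{\et}(Y_0,-)$. The main input is the homotopy exact sequence of SGA~I, Exposé~X: because $h_0$ is proper and flat with geometrically connected and geometrically reduced fibres, the unit $\calO_{S_0}\to h_{0*}\calO_{T_0}$ is an isomorphism after any base change (cohomology and base change), so for any geometric point $t$ of $Z$ lying over $s_0$ the normal closure of the image of $\pi_1^{\et}(Z,t)\to\pi_1^{\et}(T,t)$ equals the kernel of $\pi_1^{\et}(T,t)\to\pi_1^{\et}(S,s)$, where $S=S_0\otimes_{\Fq}\F$ is connected; and likewise for a geometric point $t'$ of $Z'$ over $s'_0$.

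Granting this, I would transport the identification through $\widetilde{f}_0$. Write $f_*$ as the composite $\pi_1^{\et}(Z,t)\to\pi_1^{\et}(T,t)\to\pi_1^{\et}(T_0,t)\xrightarrow{\widetilde{f}_{0*}}\pi_1^{\et}(Y_0,f(t))$, and use the elementary fact that, for a homomorphism $\varphi\colon G\to H$ and a subset $A\subseteq G$, the normal closure of $\varphi(A)$ in $H$ equals the normal closure of $\varphi(N)$, where $N$ is the normal closure of $A$ in $G$. Applied with $G=\pi_1^{\et}(T,t)$, $A$ the image of $\pi_1^{\et}(Z,t)$, and $\varphi$ the composite $\pi_1^{\et}(T,t)\to\pi_1^{\et}(T_0,t)\xrightarrow{\widetilde{f}_{0*}}\pi_1^{\et}(Y_0,-)$, this shows that $\overline{\Image(f_*)}$ is the normal closure in $\pi_1^{\et}(Y_0,-)$ of the image of $\Ker\big(\pi_1^{\et}(T,t)\to\pi_1^{\et}(S,s)\big)$, and similarly that $\overline{\Image(f'_*)}$ is the normal closure of the image of $\Ker\big(\pi_1^{\et}(T,t')\to\pi_1^{\et}(S,s')\big)$. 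Choosing an étale path in $T$ from $t$ to $t'$ and pushing it forward to $S$ and to $Y_0$, functoriality of $\pi_1^{\et}$ shows that the two kernels correspond under the resulting change-of-basepoint isomorphism $\pi_1^{\et}(T,t)\iso\pi_1^{\et}(T,t')$, hence that $\overline{\Image(f_*)}$ and $\overline{\Image(f'_*)}$ correspond under a change-of-basepoint isomorphism $\pi_1^{\et}(Y_0,f(t))\iso\pi_1^{\et}(Y_0,f'(t'))$. Since such an isomorphism preserves conjugacy classes of Frobenii at closed points and their degrees, Lemma~\ref{uniform-group-property:l} gives that $f_0$ is $\lambda$-uniform if and only if $f'_0$ is.

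The only genuine ingredient is the homotopy exact sequence, which is exactly where properness is needed (alongside flatness and the geometric connectedness and reducedness of the fibres, which yield the universal identification $\calO_{S_0}\iso h_{0*}\calO_{T_0}$); the rest is formal. I expect the part requiring most care to be the bookkeeping of base points and paths, together with the minor nuisance that $s_0$ and $s'_0$ need not be $\Fq$-rational, so that $Z$ and $Z'$ may be disconnected over $\F$. This last point is harmless: $\lambda$-uniformity is tested on connected components, and the components of $Z$ (resp.\ $Z'$) are mutually Frobenius-conjugate isomorphic copies of a single geometric fibre of $h$, so one may simply work with one such component throughout --- or, alternatively, first replace $\Fq$ by a finite extension splitting $s_0$ and $s'_0$.
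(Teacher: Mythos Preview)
Your proposal is correct and follows essentially the same route as the paper: both reduce via Lemma~\ref{uniform-group-property:l} to showing that a change-of-basepoint isomorphism carries $\overline{\Image(f_*)}$ onto $\overline{\Image(f'_*)}$, using the homotopy exact sequence for $h$ (the paper cites \cite[Tag 0C0J]{Stacks}) to identify both with the push-forward of $\Ker(h_*)$. Your explicit observation about normal closures under homomorphisms and your remark on the possible disconnectedness of $Z$ when $s_0$ is not $\Fq$-rational are welcome clarifications that the paper leaves implicit.
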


\begin{proof}
Let $z$ and $z'$ be geometric points of $Z_0$ and $Z'_0$ respectively. By \cite[Tag 0C0J]{Stacks}, we have exact sequences $$\pie(Z,z)\xrightarrow{\iota_{*}} \pie(T,z)\xrightarrow{h_{*}} \pie(S,h(z))\to 1$$
$$\pie(Z',z')\xrightarrow{\iota'_{*}} \pie(T,z')\xrightarrow{h_{*}} \pie(S,h(z'))\to 1.$$
The choice of an étale path $\gamma$ joining $z$ with $z'$ induces isomorphisms $\gamma:\pie(T,z)\iso \pie(T',z')$ and $h_*(\gamma):\pie(S,h(z))\iso \pie(S',h(z'))$. Thanks to the two exact sequences this implies that $\gamma$ restricts to an isomorphism $\overline{\Image(\iota_*)}\iso \overline{\Image(\iota'_*)}$. In turn, this implies that the induced isomorphism $f_*(\gamma):\pi_1^{\et}(Y_0,f(z))\iso \pi_1^{\et}(Y_0,f'(z'))$ restricts to an isomorphism $\overline{\Image(f_*)}\iso \overline{\Image(f'_*)}$. By construction, $f_*(\gamma)$ respects the conjugacy classes of Frobenii at closed points and their degrees. We conclude applying Lemma \ref{uniform-group-property:l}.
\end{proof}

\section{Some examples}
In this section, we verify Conjecture \ref{unif:c} in some cases. Note that by virtue of Theorem \ref{normal-uniform:t} we already know the conjecture when $Z_0$ is normal. 
\subsection{Semi-stable curves}
\label{semi-stab-curv:ss}
Let $Z_0$ be a connected semi-stable curve over $\F_q$. Write $Z^{(i)}$ where ${1\leq i \leq n}$ for the irreducible components of $Z$. Suppose that for every $i$, the irreducible component $Z^{(i)}$ is smooth. Let $z$ be a geometric point of $Z$ and for every $1\leq i \leq n$, let $z^{(i)}$ be a generic geometric point of $Z^{(i)}$. We denote by $\Gamma$ the dual graph of $Z$ and by $P$ the point of $\Gamma$ associated to the connected component where $z$ lies. 
\begin{lemma}[\cite{Stix}]
	\label{Stix:l}
The choice of étale paths $\{\gamma^{(i)}\}_{1\leq i \leq n}$ joining $z$ to $z^{(i)}$ for every $i$ determines an isomorphism $$\pie(Z,z)\simeq \pie(Z^{(1)},z^{(1)})*\dots*\pie(Z^{(n)},z^{(n)})* \pie(\Gamma,P)^\wedge,$$ where ${\pi}_1(\Gamma,P)^\wedge$ is the profinite completion of the topological fundamental group of $\Gamma$. 
\end{lemma}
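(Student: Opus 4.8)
The plan is to deduce the statement from the general Seifert–van Kampen theorem for algebraic fundamental groups, i.e. from the work of Stix. The starting observation is that, since each $Z^{(i)}$ is smooth, the normalization of $Z$ is the disjoint union $\widetilde{Z}=\coprod_{i=1}^{n}Z^{(i)}$, and $Z$ is recovered from $\widetilde{Z}$ by a pinching construction à la Ferrand: writing $N\hookrightarrow\widetilde{Z}$ for the finite reduced subscheme of points lying over the nodes and $N'\hookrightarrow Z$ for the nodes themselves, the square
$$\begin{array}{ccc} N & \longrightarrow & \widetilde{Z} \\ \downarrow & & \downarrow \\ N' & \longrightarrow & Z \end{array}$$
is cocartesian (and cartesian) in the category of schemes, with $N\to N'$ the map collapsing the two branches at each node. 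First I would invoke descent for finite étale morphisms along this conductor square to get an equivalence of Galois categories
$$\mathrm{FEt}(Z)\;\simeq\;\mathrm{FEt}(\widetilde{Z})\times_{\mathrm{FEt}(N)}\mathrm{FEt}(N'),$$
so that a finite étale cover of $Z$ is the datum of finite étale covers of the $Z^{(i)}$ together with an identification of their fibres along each node; this pushout presentation of $Z$ (with indexing shape the dual graph) is exactly the kind of input required by the van Kampen theorem of \cite{Stix}.

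Next I would feed this presentation into that van Kampen theorem. The indexing category is the one attached to $\Gamma$: its vertices are the components $Z^{(i)}$, its edges are the nodes, and the schemes attached to edges are single reduced points, hence have trivial étale fundamental group. Van Kampen then identifies $\pie(Z,z)$ with the fundamental group of the associated graph of profinite groups; since every edge group is trivial, this fundamental group is the free profinite product of the vertex groups amalgamated over $\Gamma$, namely
$$\pie(Z,z)\;\simeq\;\pie(Z^{(1)},z^{(1)})*\cdots*\pie(Z^{(n)},z^{(n)})*\pi_1(\Gamma,P)^{\wedge},$$
the last factor being the free profinite group on $b_1(\Gamma)=\#\{\text{nodes}\}-n+1$ generators, i.e. the profinite completion of $\pi_1^{\mathrm{top}}(\Gamma,P)$. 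Injectivity of each $\pie(Z^{(i)},z^{(i)})\hookrightarrow\pie(Z,z)$ is automatic once the free product description is in place. The chosen étale paths $\gamma^{(i)}$ from $z$ to $z^{(i)}$ are precisely what rigidifies the decomposition: they identify each $\pie(Z^{(i)},z^{(i)})$ with a definite subgroup of $\pie(Z,z)$ rather than merely a conjugacy class, and the free factor $\pi_1(\Gamma,P)^{\wedge}$ is produced by choosing a maximal subtree of $\Gamma$ through the vertex $P$ compatibly with these paths.

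The main obstacle is the descent-plus-van-Kampen step: one must justify that finite étale covers glue along the non-flat pinching square and that the resulting $2$-fibre product of Galois categories has the expected fundamental group, which is where the cited theorem of Stix does the substantive work (and where the distinction between an honest free product and the \emph{free profinite} product matters). The only point requiring genuine care on our side is bookkeeping: matching the abstract output of the van Kampen theorem with the concrete combinatorics — the maximal subtree, the paths $\gamma^{(i)}$, and the base vertex $P$ — so that the stated decomposition is canonical with respect to the chosen data.
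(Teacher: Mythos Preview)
The paper does not supply its own proof of this lemma: it is stated with the attribution \cite{Stix} and used as a black box. Your sketch is precisely the argument one extracts from that reference --- present the semi-stable curve as a pushout along its nodes, observe that the edge schemes are reduced points with trivial \'etale fundamental group, and apply Stix's algebraic Seifert--van Kampen theorem to obtain the fundamental group of the resulting graph of profinite groups, which collapses to the free profinite product of the vertex groups with the profinite completion of $\pi_1(\Gamma,P)$. So there is nothing to compare: your proposal is correct and is essentially a summary of the content of the cited source, which is all the paper invokes.
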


\begin{theorem}\label{tree:t}
If $\Gamma$ is a tree, every untwisted compatible system over $Z_0$ is $\lambda$-uniform.
\end{theorem}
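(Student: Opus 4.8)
The plan is to reduce the statement to the case where $\Gamma$ is a single point, i.e.\ to the case of a smooth geometrically connected curve, where $\lambda$-uniformity follows from Theorem \ref{normal-uniform:t}. The key structural input is Lemma \ref{Stix:l}: since $\Gamma$ is a tree, $\pi_1(\Gamma,P)$ is trivial, so its profinite completion $\pi_1(\Gamma,P)^\wedge$ is trivial, and hence
\[
\pie(Z,z)\simeq \pie(Z^{(1)},z^{(1)})*\dots*\pie(Z^{(n)},z^{(n)}),
\]
a free profinite product of the fundamental groups of the (smooth) irreducible components. Let $\underline{\calV_0}$ be an untwisted compatible system over $Z_0$; after extending $E$ we may assume each $\lambda$-component is split untwisted, hence étale, hence corresponds to a continuous representation $\rho_{\lambda}$ of $\pie(Z,z)$ (twisted by the Frobenius action, but the geometric part is what matters). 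I would then observe that $\calV_{\lambda}$ is geometrically trivial over $Z$ if and only if $\rho_{\lambda}|_{\pie(Z,z)}$ is trivial, and by the free product decomposition this happens if and only if $\rho_{\lambda}$ restricted to each factor $\pie(Z^{(i)},z^{(i)})$ is trivial, i.e.\ if and only if the restriction $\calV_{\lambda}|_{Z^{(i)}}$ is geometrically trivial for every $i$.

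Next I would pull the compatible system back along each inclusion $Z^{(i)}_0 \hookrightarrow Z_0$ — or rather along its normalization, since the $Z^{(i)}$ are assumed smooth — to get an untwisted compatible system $\underline{\calV_0}|_{Z^{(i)}_0}$ over the smooth curve $Z^{(i)}_0$ (one should note that untwistedness is preserved under pullback, and that the Frobenius weights are unchanged, so each pullback is genuinely untwisted). By Theorem \ref{normal-uniform:t}, each such system is strongly $\lambda$-uniform: the dimension of $H^0(Z^{(i)},\calV_{\lambda}|_{Z^{(i)}})$ is independent of $\lambda$, and in particular the dichotomy "geometrically trivial / geometrically non-trivial" over $Z^{(i)}$ is independent of $\lambda$. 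Combining this with the previous paragraph: for a fixed $\lambda$, $\calV_{\lambda}$ is geometrically trivial over $Z$ $\iff$ $\calV_{\lambda}|_{Z^{(i)}}$ is geometrically trivial for all $i$ $\iff$ (by Theorem \ref{normal-uniform:t} applied to each $i$) this holds for one $\lambda$ iff it holds for all $\lambda$. This gives precisely condition (i) vs.\ (ii) of Definition \ref{uniform-systems:d} uniformly in $\lambda$, so $\underline{\calV_0}$ is $\lambda$-uniform.

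Alternatively, and perhaps more cleanly in the language already set up, I would phrase this using Lemma \ref{uniform-group-property:l}: $\overline{\Image(\pie(Z,z)\to\pie(Y_0,f(z)))}$ — here there is no ambient $Y_0$, but one can take $Y_0 = Z_0$ and $f_0=\mathrm{id}$, or simply argue directly with the subgroups $\pie(Z,z)$ and $\pie(Z^{(i)},z^{(i)})$ — is trivial inside the geometric quotient iff each $\Image(\pie(Z^{(i)},z^{(i)}))$ is, and then invoke the known case of smooth curves. I expect the only real subtlety to be bookkeeping: checking carefully that pullback of an untwisted compatible system along $Z^{(i)}_0\to Z_0$ is again untwisted (the components could a priori fail to be geometrically connected, so one may need a further finite base field extension, which is harmless by the conventions in Definition \ref{uniform-systems:d}), and verifying that geometric triviality of a sheaf on the reducible curve $Z$ is genuinely equivalent to geometric triviality on each component — this uses that $Z$ is connected and the components meet, so a section trivial on each $Z^{(i)}$ glues, but one must make sure the gluing on the (finitely many) nodes is automatic, which it is since the nodes are $\F$-points and a trivialization is determined up to a constant that can be matched along the tree. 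None of these steps should be hard, but they are where the hypothesis "$\Gamma$ is a tree" (as opposed to merely "$\Gamma$ connected") and "$Z^{(i)}$ smooth" are actually used.
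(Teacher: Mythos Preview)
Your proposal is correct and follows essentially the same route as the paper: use Lemma \ref{Stix:l} with the tree hypothesis to identify $\pie(Z,z)$ with the free profinite product of the $\pie(Z^{(i)},z^{(i)})$, so that geometric triviality on $Z$ is equivalent to geometric triviality on each smooth component, and then invoke Theorem \ref{normal-uniform:t} on each component. Your extra bookkeeping remarks (preservation of untwistedness under pullback, possible need for a finite base field extension so that the geometric components descend) are more explicit than the paper's terse argument but do not change the strategy; the digression via Lemma \ref{uniform-group-property:l} is unnecessary here and can be dropped.
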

\begin{proof}
Let $\ucalVz$ be an untwisted compatible system over $Z_0$. If $\calV_{\lambda,0}$ is geometrically trivial for one $\lambda$, then it remains geometrically trivial when restricted to every irreducible component of $Z_0$. By Theorem \ref{normal-uniform:t}, for every other $\lambda'\in |E|_{\neq p}$, the restriction of $\calV_{\lambda',0}$ to every irreducible component of $Z_0$ is geometrically trivial as well. By Lemma \ref{Stix:l}, since $\Gamma$ is a tree, the geometric étale fundamental group of $Z_0$ is generated by the étale fundamental groups of the irreducible components $Z^{(i)}$. This shows that for every $\lambda$ the Weil lisse sheaf $\calV_{\lambda',0}$ is geometrically trivial over $Z_0$ and this yields the desired result.
\end{proof}

\begin{example}
	\label{noda-curv:ex}
If $Z_0$ is an irreducible split nodal cubic curve over $\Fq$ with nodal point $z_0$, then $\pie(Z,z)$ is isomorphic to $\hZ$. In addition, the action of $\Gal(\F/\Fq)$ on $\pie(Z,z)$ is trivial. Therefore, $\pie(Z_0,z)$ is isomorphic to $\hZ\times \Gal(\F/\Fq)$, where the embedding $\Gal(\F/\Fq)\subseteq \pie(Z_0,z)$ is induced by the closed immersion of $z_0\hookrightarrow Z_0$. The Frobenius elements of $\pie(Z_0,z)$ correspond via this isomorphism to elements of $\hZ\times \Gal(\F/\Fq)$ of the form $(0,F^d)$, where $F$ is the geometric Frobenius of $\Fq$ and $d$ is some positive integer. This implies that every pair of étale lisse sheaves over $Z_0$ which are trivial at $z_0$ are $\Q$-compatible with all the eigenvalues at closed points equal to $1$. 

On the other hand, for every prime number $\ell\neq p$ and every continuous automorphism $\alpha$ of $\overline{\Z}_\ell^{\oplus r}$, where $\overline{\Z}_\ell$ is the ring of integers of $\Qlbar$, there exists an étale lisse $\Qlbar$-sheaf of this type such that the induced $\ell$-adic representation sends $(1,\mathrm{id})$ to $\alpha$. In particular, we may take $r=1$ and as $\alpha$ the multiplication by a root of unit. This construction produces lots of examples of untwisted compatible systems over $Z_0$ which are not $\lambda$-uniform.
\end{example}
\subsection{Finite monodromy}
As in Theorem \ref{normal-uniform:t}, we may use the theory of weights in order to prove that certain morphisms are $\lambda$-uniform. This strategy needs strong finiteness conditions.
\begin{theorem}
\label{finite:t}
Let $f_0: Z_0 \to Y_0$ be a morphism of geometrically connected varieties over $\Fq$ with $Y_0$ normal. Let $z$ be a geometric point of $Z_0$. The morphism $f_0$ is $\lambda$-uniform in the following cases.
\begin{itemize}
\item [{\rm (i)}] If the smallest closed normal subgroup of $\pi_1^{\et}(Y_0,f(z))$ containing the image of $\pi_1^{\et}(Z,z)$ is an open subgroup of $\pi_1^{\et}(Y,f(z))$.
\item [{\rm (ii)}] If $\pi_1^{\et}(Y,f(z))$ contains an open solvable profinite subgroup.
\end{itemize}

\end{theorem}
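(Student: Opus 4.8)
My plan is to deduce both statements from Theorem~\ref{normal-uniform:t} (equivalently, from \cite[Cor. VI.3]{Laf}), applied over suitable normal finite étale covers of $Y_0$, after translating $\lambda$-uniformity into group theory via Lemma~\ref{uniform-group-property:l}. Fix an untwisted $E$-compatible system $\ucalVz$ over $Y_0$ with associated representations $\{\rho_{\lambda,0}\}$ of $\pie(Y_0,f(z))$, and set $N:=\overline{\Image(f_*)}$. Since the image of the geometric group $\pie(Z,z)$ already lies in the closed normal subgroup $\pie(Y,f(z))\subseteq\pie(Y_0,f(z))$, we have $N\subseteq\pie(Y,f(z))$; and by Lemma~\ref{uniform-group-property:l}, $f_0$ being $\lambda$-uniform amounts to the implication: if $N\subseteq\Ker(\rho_{\lambda_0,0})$ for some $\lambda_0$, then $N\subseteq\Ker(\rho_{\lambda,0})$ for all $\lambda$.

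For (i), write $N=\bigcap_j\pie(Y_0^{(j)},f(z))$, the intersection running over the cofiltered family of connected finite étale covers $Y_0^{(j)}\to Y_0$ attached to the open subgroups of $\pie(Y_0,f(z))$ containing $N$; each $Y_0^{(j)}$ is normal. Because $N$ is \emph{open} in $\pie(Y,f(z))$, compactness of $\pie(Y,f(z))\setminus N$ shows that $\pie(Y_0^{(j)},f(z))\cap\pie(Y,f(z))$ stabilises to $N$ for $j\ge j_0$. Thus for such $j$ the geometric fibre $Y^{(j)}:=Y_0^{(j)}\otimes_{\Fq}\F$ is a finite disjoint union of connected normal varieties each with geometric fundamental group $N$, so $f^*\calV_\lambda$ is geometrically trivial if and only if $N$ acts trivially through $\rho_\lambda$, if and only if $\calV_\lambda|_{Y^{(j_0)}}$ is geometrically trivial, if and only if $\dim H^0(Y^{(j_0)},\calV_\lambda)=\#\pi_0(Y^{(j_0)})\cdot\mathrm{rk}(\calV_\lambda)$. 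The right-hand side is manifestly independent of $\lambda$ (the rank of a compatible system and the number of components of a fixed variety are); and the left-hand side is independent of $\lambda$ by the argument of Theorem~\ref{normal-uniform:t} applied to each connected component of $Y^{(j_0)}$ — which is, over its field of constants, a geometrically connected normal variety on which $\calV_{\lambda,0}$ restricts to a pure weight-$0$ sheaf, purity being preserved by finite étale pullback, so \cite[Cor. VI.3]{Laf} applies on its smooth locus. This yields (i).

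For (ii) I would argue as follows. As $\pie(Y,f(z))$ is virtually prosolvable and $\pie(Y_0,f(z))/\pie(Y,f(z))\simeq\hZ$ is prosolvable, $\pie(Y_0,f(z))$ has an open normal prosolvable subgroup (the normal core of an open prosolvable one). Each $\calV_{\lambda,0}$ is untwisted, hence geometrically semisimple (\cite[Corollary 3.5.2.(ii)]{Dad}) and pure of weight $0$; its geometric monodromy group is therefore reductive and virtually solvable, so its identity component is a torus, and after a connected finite étale cover of $Y_0$ (and a finite extension of $\Fq$) the sheaf $\calV_\lambda$ is built, up to further finite étale covers, from rank-one sheaves and finite-monodromy sheaves over normal varieties. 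One then studies $N\subseteq\Ker(\rho_{\lambda,0})$ summand by summand: the finite-monodromy factors are handled exactly as in (i); for the rank-one ``toric'' factors, which live over normal covers of $Y_0$, triviality after pullback along $f_0$ is detected through Chebotarev over the normal base and the $\lambda$-independence of the ($E$-rational) Frobenius eigenvalues, with Theorem~\ref{normal-uniform:t} again controlling the relevant dimensions of invariants; reassembling over $\lambda$ gives (ii).

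The delicate point lies entirely in (ii): the monomial-type decomposition of $\calV_\lambda$ and the auxiliary covers used to split off its rank-one pieces depend a priori on $\lambda$, and the covers of the possibly singular $Z_0$ induced by them are again singular, so one cannot simply invoke Chebotarev on $Z_0$ — Example~\ref{noda-curv:ex} shows this genuinely fails when the target is not normal. Making these $\lambda$-dependent choices cohere into a single $\lambda$-uniform statement, which is precisely where virtual prosolvability (``strong finiteness'') and the rigidity of the compatible system are used, is the main obstacle. In (i) this difficulty is bypassed because the openness of $N$ collapses the tower of covers to one finite level, landing us directly in the range of Theorem~\ref{normal-uniform:t}.
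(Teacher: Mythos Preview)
Your argument for (i) is correct and is genuinely different from the paper's. The paper observes that once $N\subseteq\Ker(\rho_{\lambda_0,0})$ the geometric monodromy group $G_{\lambda_0}$ is finite, and then invokes \cite[Proposition 2.2]{LP5} to conclude that the geometric kernels $\Ker(\rho_{\lambda'})\cap\pie(Y,f(z))$ are independent of $\lambda'$, whence $N$ lies in all of them. You instead manufacture, by a compactness argument, a single connected finite étale cover $Y_0^{(j_0)}\to Y_0$ whose geometric fundamental group is exactly $N$, and then detect $N\subseteq\Ker(\rho_{\lambda,0})$ as the numerical equality $\dim H^0(Y^{(j_0)},\calV_\lambda)=\#\pi_0(Y^{(j_0)})\cdot\mathrm{rk}(\calV_\lambda)$, both sides of which are $\lambda$-independent by (the proof of) Theorem~\ref{normal-uniform:t}. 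Your route avoids Larsen--Pink entirely and stays within Lafforgue plus purity; the paper's route is shorter but imports an $\ell$-independence statement as a black box.

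Your treatment of (ii), however, has a real gap, and you in fact flag it yourself. You only use that $\calV_{\lambda,0}$ is geometrically semi-simple to conclude that $G_\lambda$ is \emph{reductive}, and then struggle with a putative toric identity component and $\lambda$-dependent monomial decompositions. The paper's key observation, which you miss, is that \cite[Corollary 3.4.10]{Dad} gives more: since $\ucalVz$ is untwisted (hence pure of weight $0$ on the normal $Y_0$), each $G_\lambda$ is a \emph{semi-simple} algebraic group, not merely reductive. A connected semi-simple group that is solvable is trivial, so the virtual prosolvability of $\pie(Y,f(z))$ forces $G_\lambda^0=1$ and hence $G_\lambda$ is finite for \emph{every} $\lambda$. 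At that point there is no toric part to worry about, and one concludes exactly as in (i): either by \cite[Proposition 2.2]{LP5} as the paper does, or by your own cover-and-count argument, since for each $\lambda$ the representation $\rho_\lambda$ now factors through a finite quotient of $\pie(Y,f(z))$. In short, replacing ``reductive'' by ``semi-simple'' collapses the obstacle you describe in your final paragraph.
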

\begin{proof}Let $\underline{\calV_0}$ be an untwisted compatible system over $Y_0$. For every $\lambda\in |E|_{\neq p}$, write $G_{\lambda}$ for the geometric monodromy group of $\calV_{\lambda,0}$.\\
	
\begin{itemize}
\item [(i)]
If $f_0^*(\calV_{\lambda,0})$ is geometrically trivial over $Z_0$ for one $\lambda$, then $\rho_{\lambda}$ is trivial when restricted to the image of $\pi_1^{\et}(Z,z)$ in $\pi_1^{\et}(Y,f(z))$. Thanks to the hypothesis, we deduce that $\rho_{\lambda}$ factors through a finite quotient of $\pi_1^{\et}(Y,f(z))$. This implies that $G_{\lambda}$ is a finite algebraic group. By \cite[Proposition 2.2]{LP5}, the subgroups $$\Ker(\rho_{\lambda'})\cap \pi_1^{\et}(Y,f(z))\subseteq \pi_1^{\et}(Y,f(z))$$ are all equal when $\lambda'$ varies in $|E|_{\neq p}$. Since the image of $\pi_1^{\et}(Z,z)$ in $\pi_1^{\et}(Y,f(z))$ is contained in $\Ker(\rho_{\lambda})$, it is also contained in $\Ker(\rho_{\lambda'})$ for every $\lambda'\in |E|_{\neq p}$. Therefore, the lisse sheaf $f_0^*(\calV_{\lambda',0})$ is geometrically trivial for every $\lambda'\nmid p$, as we wanted.\\

\item [(ii)]
By \cite[Corollary 3.4.10]{Dad}, the algebraic groups $G_\lambda$ are all semi-simple because $\underline{\calV_0}$ is pure. On the other hand, thanks to the assumption that $\pi_1^{\et}(Y,f(z))$ is solvable, we know that all the algebraic groups $G_\lambda$ are solvable as well. Combining these two properties we deduce that each algebraic group $G_\lambda$ is finite and we can proceed as in the previous case.
\end{itemize}

\end{proof}
\begin{corollary}
A dominant morphism $f_0:Z_0\to Y_0$ is $\lambda$-uniform. In particular, if $Y_0$ is a smooth curve, every morphism to $Y_0$ is $\lambda$-uniform.
\end{corollary}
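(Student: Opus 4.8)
The plan is to deduce both assertions from Theorem~\ref{finite:t}(i), by checking that a dominant morphism satisfies its hypothesis on fundamental groups.

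First the standard reductions: after extending $\Fq$ and working on a connected component we may assume $Y_0$ and $Z_0$ geometrically connected; since $f_0$ is dominant, so is $f\colon Z\to Y$ (flat base change), and $Y$ is normal (being normal over the perfect field $\Fq$, $Y_0$ is geometrically normal). Fix a geometric point $z$ of $Z_0$ and let $H$ be the image of $\pi_1^{\et}(Z,z)\to\pi_1^{\et}(Y,f(z))$. Since $\pi_1^{\et}(Y,f(z))$ is a normal subgroup of $\pi_1^{\et}(Y_0,f(z))$, the smallest closed normal subgroup of $\pi_1^{\et}(Y_0,f(z))$ containing $H$ is already contained in $\pi_1^{\et}(Y,f(z))$, and it is open there as soon as $H$ is. So it suffices to prove that $H$ is open in $\pi_1^{\et}(Y,f(z))$ and then invoke Theorem~\ref{finite:t}(i).

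To prove $H$ open I cut $Z$ down to dimension $n:=\dim Y$. A standard application of Noether normalization over a dense affine open of $Y$ produces an irreducible closed subvariety $W$ of an affine open of $Z$ with $\dim W=n$ such that $W\to Y$ is dominant, hence generically finite of some degree $d:=[k(W):k(Y)]<\infty$. Choosing a geometric point $w$ of $W$ and an étale path to $z$, the composite $\pi_1^{\et}(W,w)\to\pi_1^{\et}(Z,z)\to\pi_1^{\et}(Y,f(z))$ has image a conjugate of a subgroup of $H$, so it is enough to show that the image $H_W$ of $\pi_1^{\et}(W,w)\to\pi_1^{\et}(Y,f(w))$ is open. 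Let $Y'\to Y$ be a connected finite étale cover to which $W$ lifts; then $Y'$ is normal and irreducible (finite étale over the normal connected $Y$), and $W\to Y'$ is again dominant, so $[k(Y'):k(Y)]\le[k(W):k(Y)]=d$. Since $W$ lifts to $Y'$ exactly when $H_W$ is contained in a conjugate of the corresponding open subgroup, every open subgroup of $\pi_1^{\et}(Y,f(w))$ containing $H_W$ has index at most $d$; and $H_W$, being the continuous image of a profinite group, is closed, hence equals the intersection of the open subgroups containing it. Therefore $[\pi_1^{\et}(Y,f(w)):H_W]\le d<\infty$, i.e. $H_W$ is open, and the first assertion follows.

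For the second assertion let $Y_0$ be a smooth, hence normal, curve. On each connected component of $Z_0$ the morphism $f_0$ is either dominant --- and then $\lambda$-uniform by the first part --- or else has image a single closed point $y_0$; in the latter case $f_0$ factors through $\Spec\kappa(y_0)$, so $f_0^*\calV_{\lambda,0}$ has trivial geometric monodromy for every $\lambda$ and every untwisted compatible system $\underline{\calV_0}$, whence $f_0^*\underline{\calV_0}$ is geometrically trivial and in particular $\lambda$-uniform. I do not anticipate a real obstacle here: the whole content sits in Theorem~\ref{finite:t}(i), and what is left is the elementary fact that a dominant morphism induces a map with open image on geometric fundamental groups. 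The two points needing slight care are the reduction, via Noether normalization, to the generically finite $W$, and the implication ``all finite subcovers of $Y$ to which $W$ lifts have degree $\le d$'' $\Rightarrow$ ``$H_W$ has finite index''; this last step --- the assertion that the index of a closed subgroup of a profinite group is the supremum of the indices of the open subgroups containing it --- is the only mildly technical ingredient.
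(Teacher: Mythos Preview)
Your argument is correct and follows the same strategy as the paper: both reduce to Theorem~\ref{finite:t}(i) by showing that the image of $\pi_1^{\et}(Z,z)$ in $\pi_1^{\et}(Y,f(z))$ is open. The only difference is in how openness is established: the paper takes a closed point $\eta'$ of the generic fibre $Z_\eta$ and uses that $\pi_1^{\et}(\eta',\overline{\eta})$ has finite index in $\pi_1^{\et}(\eta,f(\overline{\eta}))$ together with the surjection $\pi_1^{\et}(\eta)\twoheadrightarrow\pi_1^{\et}(Y)$ for normal $Y$, whereas you spread such a point out to a generically finite subvariety $W$ and bound indices of \'etale covers---your $W$ is exactly the closure of the paper's $\eta'$, so the two routes are essentially the same, the paper's phrasing just sidestepping the profinite bookkeeping.
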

\begin{proof} Let $\eta\hookrightarrow Y$ be the generic point of $Y$ and write $Z_{\eta}$ the preimage of $\eta$ via $f$. Since $f$ is dominant, the scheme $Z_{\eta}$ is a non-empty variety over the function field of $Y$. Fix a closed point $\eta'$ of $Z_{\eta}$ and choose a geometric point $\overline{\eta}$ over $\eta'$. We have the following commutative diagram
	
			\begin{center}
		\begin{tikzcd}
		\pie(\eta',\overline{\eta}) \arrow[hook,r] \arrow[d] &  \pie(\eta,f(\overline{\eta}))  \arrow[d, two heads]\\
		\pi_1^{\et}(Z,\overline{\eta}) \arrow[r] & \pi_1^{\et}(Y,f(\overline{\eta})).
		\end{tikzcd}
	\end{center}
Note that $\pie(\eta',\overline{\eta})$ maps to a finite index subgroup of $\pie(\eta,f(\overline{\eta}))$. Therefore, the image of $\pi_1^{\et}(Z,\overline{\eta})$ in $\pi_1^{\et}(Y,f(\overline{\eta}))$ has finite index as well. Thanks to this, we may apply Theorem \ref{finite:t}.(i) to conclude. 
\end{proof}

\subsection{Final comments}\label{fina-comm:ss}
Apart from the cases presented above, it seems difficult to prove in general that a closed subvariety of a smooth variety is $\lambda$-uniform. Note that the property of a Weil lisse sheaf of being geometrically trivial can be thought as the combination of two properties: the property of being geometrically unipotent and the one of being geometrically finite\footnote{For us, a geometrically unipotent (resp. finite) Weil lisse sheaf is a Weil lisse sheaf with unipotent (resp. finite) geometric monodromy group.}. Let us focus on the second one. 
\begin{definition}
We say that a subvariety $Z_0\subseteq Y_0$ is \textit{pseudo-$\lambda$-uniform} if it satisfies the analogous condition as a $\lambda$-uniform subvariety where “geometrically \textit{trivial} Weil lisse sheaves” are replaced by “geometrically \textit{finite} Weil lisse sheaves”.
\end{definition}
 In the following example, we present a concrete class of subvarieties where the pseudo-$\lambda$-uniformity is not immediate and, at the best of our knowledge, it is not known.

\begin{example}[de Jong]
	\label{dJ:ex}
	For an integer $n\geq 3$ such that $(n,p)=1$, we write $Y^{(n)}$ for the moduli scheme of principally polarized abelian surfaces over $\F$ with a symplectic level-$n$-structure. Let $Z^{(n)}\subseteq Y^{(n)}$ be the supersingular locus of $Y^{(n)}$. Since we are working with abelian surfaces, this coincides with the Moret--Bailly locus of $Y^{(n)}$, which is a union of copies of $\PP^1_{\F}$. Therefore, by \cite[Proposition 7.3]{Oor01}, the variety $Z^{(n)}$ is connected for every choice of $n$. Let $N$ be a positive multiple of $n$ which is also prime to $p$. The preimage of the natural finite étale Galois cover $Y^{(N)}\to Y^{(n)}$ is $Z^{(N)}$. As $Z^{(N)}$ is connected, the restriction $Z^{(N)}\to Z^{(n)}$ is a finite étale Galois cover with the same Galois group as $\Gal(Y^{(N)}/Y^{(n)})$. If $z$ is a geometric point of $Z^{(n)}$, we have the following commutative diagram.
	\begin{center}
		\begin{tikzcd}
		\pi_1^{\et}(Z^{(n)},z) \arrow{r} \arrow[d,two heads] &  \pi_1^{\et}(Y^{(n)},z)\arrow[d,two heads]\\
		\Gal(Z^{(N)}/Z^{(n)})   & \arrow[l,"="'] \Gal(Y^{(N)}/Y^{(n)}).\
		\end{tikzcd}
	\end{center}
	When $N$ goes to infinity, the cardinality of $\Gal(Y^{(N)}/Y^{(n)})$ goes to infinity as well. This implies that the image of $\pi_1^{\et}(Z^{(n)},z)\to \pi_1^{\et}(Y^{(n)},z)$ is infinite. The varieties $Z^{(n)}$ and $Y^{(n)}$ descend to geometrically connected varieties $Z_0^{(n)}$ and $Y_0^{(n)}$ over some finite field $\F_q$. The restriction of a Weil lisse sheaf over $Y_0^{(n)}$ is not in general geometrically finite over $Z_0^{(n)}$. We cannot prove in this case that $Z_0^{(n)}\subseteq Y_0^{(n)}$ is pseudo-$\lambda$-uniform. Nonetheless, we think this might be a nice concrete example to analyse.
	
	\end{example}

\bibliographystyle{ams-alpha}

\begin{thebibliography}{ABCD1}

\bibitem[D'Ad17]{Dad} M. D'Addezio, \textit{The monodromy groups of lisse sheaves and overconvergent $F$-isocrystals}, to appear in \emph{Sel. Math. New Ser.}, preprint available at \href{https://arxiv.org/abs/1711.06669}{arXiv:1711.06669} (2017).

\bibitem[Del80]{Weil2}
P. Deligne, La Conjecture de Weil, II, \emph{Publ. Math. IHES} \textbf{52} (1980), 137--252.

\bibitem[Del12]{Del}
P.~Deligne, Finitude de l’extension de $\mathbb{Q}$ engendrée par des traces de Frobenius, en caractéristique finie, \emph{Mosc. Math. J.} \textbf{12} (2012), 497--514.

\bibitem[Dri12]{Dri}
V. Drinfeld, On a conjecture of Deligne, \emph{Mosc. Math. J.} \textbf{12} (2012), 515--542.

\bibitem[Laf02]{Laf}
L. Lafforgue, Chtoucas de Drinfeld et correspondance de Langlands, \emph{Invent. Math.} \textbf{147} (2002), 1--241.


\bibitem[LP95]{LP5}
M. Larsen and R. Pink, Abelian varieties, $\ell$-adic representations, and $\ell$-independence, \emph{Math. Ann.} \textbf{302} (1995), 561--580.

\bibitem[Oor01]{Oor01}
F. Oort, A Stratification of a Moduli Space of Abelian Varieties, in \emph{Moduli of Abelian varieties}, Progress in Mathematics, Birkh\"auser, Basel, 2001, 345--416.
 
\bibitem[Stacks]{Stacks}
The Stacks Project Authors, \emph{Stacks Project}, \url{http://stacks.math.columbia.edu}, retrieved June 2020.

\bibitem[Sti06]{Stix}
J.~Stix, A general Seifert-Van Kampen theorem for algebraic fundamental groups, in \emph{RIMS} \textbf{42} (2006), 763--786.




\end{thebibliography}

\end{document}